\newtheorem{theorem}{Theorem}[section]
\newtheorem{lemma}[theorem]{Lemma}
\newtheorem{corollary}[theorem]{Corollary}
\newtheorem{proposition}[theorem]{Proposition}
\newtheorem{remark}[theorem]{Remark}
\newtheorem{definition}[theorem]{Definition}
\newtheorem*{definitionA}{Definition A}
\newcommand{\innerthmname}{}
\theoremstyle{definition}
\def\namedlabel#1#2{\begingroup
	#2%
	\def\@currentlabel{#2}%
	\phantomsection\label{#1}\endgroup
}
\newcommand{\dive}{\mathrm{div}}
\newcommand{\Sp}{\mathbb{S}}
\newcommand{\ep}{\varepsilon}
\newcommand{\ha}{\mathcal{H}}
\newcommand{\s}{\hspace{7pt}}
\newcommand{\R}{\mathbb{R}}
\newcommand{\N}{\mathbb{N}}
\newcommand{\vsp}{\vspace{2pt}}
\newcommand{\ns}{\hspace{-2.5pt}}
\title[Stable $s$-minimal cones in $\mathbb{R}^2$ are flat for $s \sim 0$]{Stable $s$-minimal cones in $\mathbb{R}^2$ are flat for $s \sim 0$}
\author[M. Caselli]{Michele Caselli}
\address[M. Caselli]{
	Scuola Normale Superiore
	\newline\indent 
	Piazza dei Cavalieri 7, 56126 Pisa, Italy}
\email{\href{mailto:michele.caselli@sns.it}{michele.caselli@sns.it}}
\def\l@subsection{\@tocline{2}{0pt}{2.5pc}{5pc}{}}
\def\l@subsubsection{\@tocline{2}{0pt}{5pc}{7.5pc}{}}
\begin{document}
	
	\begin{abstract}

  For $s \in (0,1)$ small, we show that the only cones in $\mathbb{R}^2$ stationary for the $s$-perimeter and stable in $\mathbb{R}^2 \setminus \{0\}$ are half-planes. This is in direct contrast with the case of the classical perimeter or the regime $s$ close to $1$, where nontrivial cones as $\{xy>0\} \subset \mathbb{R}^2$ are stable for inner variations.

	\end{abstract}
	
	\maketitle

 \tableofcontents
	
	\section{Introduction}

 In this work, we prove that for $s$ close to zero, half-planes are the only $s$-minimal cones in $\R^2$ that are stable in $\R^2 \setminus \{0\}$. Here, by an $s$-minimal cone, we mean an open cone $E\subset \R^2$ that is an $s$-minimal surface (that is, a stationary set for the $s$-perimeter under inner variations). This result is purely nonlocal since it is in direct contrast with both the classical case (formally $s = 1$) and the regime where $s$ is close to $1$, where the cross $\textbf{X} = \{xy > 0\}$ is a nontrivial (i.e., not a half-plane) stationary cone in the plane that is stable for inner variations, in any reasonable sense. 

 \vsp 
 Nevertheless, for $s$ close to zero, the cross $\textbf{X}$ is unstable in $\R^2\setminus \{0\}$ (see \ref{sec: appendix}), and has infinite index by Corollary \ref{cor: fmi cones}. Our proof relies on the behavior of the best constant in Hardy's inequality for the $H^{\sigma}(\R)$ seminorm as $\sigma \downarrow 1/2$. 

\vsp 
Classification results for $s$-minimal cones in $\R^n$ have been previously proved in different ranges of $s$ and $n$ and with various hypotheses on the cone, such as minimality in compact subsets or stability. Before stating our main result precisely and some consequences in Subsection \ref{sbs: min-max}, let us recall the previous literature on this problem. Even though slightly different notions of stability have been used in the literature---see Subsection \ref{sbs: diff stability} for a complete discussion about this---the known classification results for $s$-minimal cones in $\R^n$ can be summarized as follows. 

\vsp 
The table below has to be read in this way: for a cone $E\subset \R^n$, the hypotheses in each row imply that the $E$ is a half-space. 

\vspace{5pt} 
\begin{center}
\setlength{\tabcolsep}{8pt} 
\renewcommand{\arraystretch}{1.4} 
\begin{tabular}{ |c|l|c| } 
\cline{2-3} \multicolumn{1}{c|}{}
 & \textbf{Class of cones:} & \textbf{Range of $s$:} \\ \hline
\cite{SV2min, SV1mon} & Minimizing in $\R^2$ & $ \forall \, s\in (0,1)$ \\  \hline 
\cite{CSV} & Stable by rearrangements\footnotemark in $\R^2$ & $ \forall s\in (0,1)$ \\  \hline 
\cite{CCScones} & Stable and smooth in $\R^3 \setminus \{0\}$ & $s$ close to $1$ \\  \hline
\cite{CDSV} & Stable and smooth in $\R^4 \setminus \{0\}$ & $s$ close to $1$ \\  \hline
\cite{CVRegLimitingArg} & Minimizing in $\R^n$, for $2\le n \le 7$& $s$ close to $1$ \\  \hline
\end{tabular}
\end{center}

\vspace{6pt} 

\footnotetext{The notion of stability by rearrangements, which is Definition A in Subsection \ref{sbs: diff stability}, is an ad-hoc notion of stability developed to get rid of cross-like singularities directly from the definition.}

Let us stress that the results in \cite{SV2min, SV1mon} provide, for all $s \in (0,1)$, the classification of cones in $\R^2$ minimizing the $s$-perimeter in compact sets, in accordance with the classical case. These results do not imply that stable $s$-minimal cones in $\R^2$ are flat. In fact, for the notion of stability that we consider in this work (Definition \ref{def: stability} below), which is the most natural one induced by inner variations and also used in similar contexts like stationary varifolds, this fact is not even believed to be true for all $s\in (0,1)$. Indeed, for inner variations, the cross $\textbf{X}$ is expected to be a stable $s$-minimal cone in $\R^2$ for $s$ close to $1$, again in accordance with the classical case. 

\vsp 
The following is the notion
of stability that we use is this work. 

\begin{definition}[Stability]\label{def: stability}
    Let $\mathcal{U} \subset \R^n $ be an open set and $E$ be an $s$-minimal surface in $\mathcal{U}$ (see Definition \ref{def: smin surf}). We say that $E$ is stable in $\mathcal{U}$ if for every bounded Lipschitz domain $ \Omega \Subset \mathcal{U} $ we have
\begin{equation*}
   \delta^2 {\rm Per}_s( E, \Omega)[X] :=  \frac{d^2}{d t^2} \bigg|_{t=0} {\rm Per}_s( \phi_t^X(E), \Omega) \ge  0 ,   \s\s \forall \, X\in C_c^2(\Omega; \R^n) , 
\end{equation*}    
where $\phi_t^X : \Omega \to \Omega $ is the flow of $X$ at time $t>0$.
\end{definition}

\begin{remark}\label{rem: well def second variation} By \cite[Lemma 3.11]{CFSfrac}, if ${\rm Per}_s(E, \Omega ) <+\infty$ and $X\in C_c^2(\Omega; \R^n)$ then the map $t\mapsto {\rm Per}_s( \phi_t^X(E), \Omega)$ is well-defined for all $t>0$ and of class $C^2$. Thus, the previous definition of stability is meaningful without any a priori assumption on the regularity of $ E$. 
\end{remark}

For $s$ close to zero, the situation could differ from that of the classical perimeter. For example, in \cite[Theorem 4]{LawsonCones}, for $s$ close to zero, the authors construct a non-flat $s$-minimal cone in $\R^7$ that is smooth and stable in $\R^7 \setminus \{0\}$. This is in contrast with the case of the classical perimeter since, by a celebrated result by Simons \cite{Simons}, for $3\le n \le 7$, the only cones in $\R^n$ that are smooth and stable in $\R^n\setminus \{0\}$ are the hyperplanes. We refer to \cite[Chapter 9]{Chern} for a simplified exposition of Simons' result and to \cite[Theorem 1.16]{CPcones} for a modern presentation. 

\vsp 
In this work, for small $s$, we prove the first classification result for stable $s$-minimal cones in $\R^2\setminus \{0\}$ in direct contrast with the case of the classical perimeter or the regime $s$ close to $1$. The precise statement of our main result is as follows. 

 \begin{theorem}\label{thm: main}
   There exists $s_\circ \in (0,1/2)$ with the following property. Let $s\in (0, s_\circ)$ and $E \subset \R^2$ be an $s$-minimal cone stable in $\R^2 \setminus \{0\}$ (see Definition \ref{def: stability}). Then $E$ is a half-plane.
\end{theorem}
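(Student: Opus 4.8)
The plan is to reduce the statement to a one-dimensional stability inequality on the unit circle $\mathbb{S}^1$, and then to show that this inequality forces the cone to have a single ``jump''. First I would parametrize an $s$-minimal cone $E \subset \R^2$ by its trace $\Sigma = E \cap \mathbb{S}^1$, a finite union of arcs, whose endpoints are the points where $\partial E$ meets the unit circle; since $E$ is a cone smooth away from $0$, $\Sigma$ is a disjoint union of finitely many open arcs, and $E$ being a half-space is equivalent to $\Sigma$ being a single half-circle. The stationarity and stability of $E$ in $\R^2\setminus\{0\}$ translate, after writing the second variation of the $s$-perimeter for a normal vector field supported away from the origin and using the conical structure to separate the radial and angular variables, into a spectral-type inequality: a weighted $H^\sigma$-type Rayleigh quotient on each boundary point must be nonnegative, where the relevant fractional exponent is $\sigma = \sigma(s)$ with $\sigma(s)\uparrow 1/2$ as $s\downarrow 0$. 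Concretely, I expect the second variation to contain a positive Jacobi-field term plus a negative term whose coefficient blows up precisely like the best constant in the Hardy inequality for the $\dot H^\sigma(\R)$ seminorm, which degenerates as $\sigma\to 1/2$.

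The key computation is therefore the following: localize near one endpoint $p$ of an arc of $\Sigma$, so that locally $\partial E$ looks like a half-line, and test the stability inequality with a family of normal perturbations concentrating at $p$. After rescaling, the leading-order contribution is a Hardy-type quantity on $\R$ of the form $[\varphi]_{\dot H^\sigma}^2 - \Lambda_\sigma \int_\R \frac{\varphi^2}{|t|^{2\sigma}}\,dt$, and the coefficient $\Lambda_\sigma$ coming from the angular part of the second variation is bounded below by a fixed positive constant, while the sharp Hardy constant $\mathcal{C}_\sigma$ for the $\dot H^\sigma(\R)$ seminorm satisfies $\mathcal{C}_\sigma \to 0$ as $\sigma \uparrow 1/2$ (this is the behavior of the best Hardy constant alluded to in the introduction). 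Hence for $s$ small enough — equivalently $\sigma$ close enough to $1/2$ — one has $\Lambda_\sigma > \mathcal{C}_\sigma$, so the localized Rayleigh quotient can be made negative unless there is no endpoint to concentrate at, i.e. unless $\partial E$ has empty ``corner set'' on $\mathbb{S}^1$.

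Once it is shown that, for $s<s_\circ$, a stable $s$-minimal cone cannot have any boundary point on $\mathbb{S}^1$ around which a concentrating competitor produces negative second variation, I would conclude as follows. If $\Sigma$ is neither empty nor all of $\mathbb{S}^1$, it has at least one endpoint, and the construction above produces an admissible variation compactly supported in $\R^2\setminus\{0\}$ with negative second variation, contradicting stability; moreover the cases $\Sigma = \emptyset$ and $\Sigma = \mathbb{S}^1$ are excluded because $\partial E$ must be nontrivial for $E$ to be a genuine $s$-minimal surface (or are trivially ``half-spaces'' in a degenerate reading and can be ruled out separately). The remaining possibility is that $\Sigma$ is a single arc; stationarity of the $s$-perimeter then pins down the arc to be exactly a half-circle, because the nonlocal mean-curvature equation $H_s[\partial E] = 0$ along $\partial E$, evaluated for a cone with a single pair of antipodal-or-not rays, forces the two rays to be collinear. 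Therefore $E$ is a half-space.

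The main obstacle I anticipate is the \emph{uniformity} in the first step: one must show that the positive Jacobi-type term in the second variation has a coefficient bounded below \emph{independently of $s$} as $s\to 0$, while simultaneously controlling all lower-order and cross terms in the (nonlocal, hence fully coupled) second-variation bilinear form so that the reduction to the clean Hardy inequality on $\R$ is legitimate. In particular the nonlocal kernel does not localize, so the ``far'' interactions — between the concentrating bump near $p$ and the rest of $\partial E$ — must be estimated and shown to be negligible or to have the right sign after the rescaling; handling these tails carefully, and matching the rescaling exponent to the Hardy exponent $2\sigma$, is where the real work lies. The degeneration $\mathcal{C}_\sigma\to 0$ as $\sigma\uparrow 1/2$ is the mechanism, but quantifying it against a fixed lower bound $\Lambda_\sigma\ge\Lambda_\circ>0$ is the crux.
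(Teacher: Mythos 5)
Your proposal correctly identifies the engine of the result: the sharp Hardy constant for the $H^{\sigma}(\R)$ seminorm with $\sigma=\tfrac{1+s}{2}$ degenerates (like $s^2$) as $s\downarrow 0$ --- note $\sigma$ approaches $1/2$ from above, not below --- and this is played against the normal-difference term in the fractional stability inequality. But the core construction you propose is misdirected. The potential in the second variation, $V(x)=\int_{\partial E}|\nu(x)-\nu(y)|^2|x-y|^{-(2+s)}\,d\sigma(y)$, is singular only at the vertex of the cone: along each boundary ray it behaves like $c\,|x|^{-(1+s)}$ with the singularity at the origin, while near any endpoint $p$ of an arc of $E\cap\Sp^1$ the boundary is a straight segment and $V$ is bounded. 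A normal bump of scale $\ep$ concentrating at such a smooth point has seminorm of order $\ep^{-s}$ and potential term of order $\ep$, so concentration there makes the Rayleigh quotient \emph{more} positive, not negative; the functions that exploit the degenerate Hardy constant are spread over many scales relative to the origin, which is why the paper tests stability with a single radial profile $\xi(|x|)$ (a near-saturator of Hardy on $(0,\infty)$) placed on every ray.

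The second, more serious gap is your claimed uniform lower bound $\Lambda_\sigma\ge\Lambda_\circ>0$ for the angular coefficient: it is false in general --- it vanishes exactly for the half-plane, whose two boundary rays have equal normals --- and as stated your dichotomy (``any endpoint on $\Sp^1$ yields instability'') would prove the half-plane unstable, which is absurd. Producing an effective lower bound for non-flat cones, and beating the cross-ray terms with it, is the actual content of the proof, and it needs ingredients absent from your plan: (i) a BV estimate with explicit constant, ${\rm Per}(E;B_2\setminus B_1)\le C/s$, bounding the number $2N$ of rays by $C/s$; (ii) the choice of the \emph{same} radial profile on all rays, so that the cross-ray terms on the stable side involve differences $\xi(|x|)-\xi(|y|)$ and are bounded, via $|x-y|\ge\big||x|-|y|\big|$, by $N^2$ times the one-dimensional seminorm --- your hope that ``far interactions are negligible'' fails precisely here, since a test function supported near a single ray leaks a Hardy-weighted term with an order-one coefficient comparable to the main term; (iii) the resulting dichotomy: either every ray sees total angular interaction at least $1/100$, which combined with the Hardy gain $1/(Cs^2)$ and the bound $N\le C/s$ gives a contradiction for small $s$, or some ray sees interaction at most $1/100$, which forces $N=1$ because the ray two steps away alone contributes $(1-\cos\theta)^{-s}\ge 2^{-s}$; and (iv) for $N=1$, a first-variation computation (the paper slides $E+e_1\subset E$ and evaluates the $s$-mean curvature at $e_1$) showing the single sector must have angle $\pi$ --- a step you assert without proof. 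Without (i)--(iii) the reduction to the clean Hardy inequality on $(0,\infty)$ is not legitimate, so the proposal as written does not close.
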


Moreover, using the fact that $s$-minimal cones with finite Morse index outside the origin are stable outside the origin (which is a trivial observation in the classical case of the perimeter, but not entirely trivial for $s$-minimal cones, see the beginning of Section \ref{sec: morse}), we deduce that the conclusialon of Theorem \ref{thm: main} also holds for cones of finite Morse index. 

\begin{corollary}\label{cor: fmi cones}
   The classification of Theorem \ref{thm: main} holds for $s$-minimal cones of finite Morse index in $\R^2 \setminus \{0\}$ (see Definition \ref{def: Morse index}). 
\end{corollary}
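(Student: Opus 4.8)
The plan is to derive the corollary from Theorem~\ref{thm: main} by first establishing the principle alluded to above: an $s$-minimal \emph{cone} $E \subset \R^2$ of finite Morse index in $\R^2 \setminus \{0\}$ is automatically stable in $\R^2 \setminus \{0\}$, after which Theorem~\ref{thm: main} forces $E$ to be a half-space. I would prove this principle by contradiction. Suppose $E$ is an $s$-minimal cone with finite Morse index in $\R^2 \setminus \{0\}$ but not stable there; then $Q_E(\varphi_0) < 0$ for some test function $\varphi_0$ supported in $\R^2 \setminus \{0\}$, where $Q_E$ denotes the second variation of the fractional perimeter on $\partial E$ — a Gagliardo $H^s$-type seminorm on $\partial E$ minus a zeroth-order term. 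Replacing $\varphi_0$ by $|\varphi_0|$ does not increase $Q_E$ (the seminorm can only decrease, by the reverse triangle inequality, while the zeroth-order term is unchanged), so we may assume $\varphi_0 \ge 0$; and since $E$ is a cone, a dilation — which leaves $E$ invariant and rescales $Q_E(\varphi_0)$ by a positive factor — lets us further assume $\supp \varphi_0 \subset \{1 < |x| < \lambda_0\}$ for some $\lambda_0 > 1$.

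Fix $\lambda > \lambda_0$ and set $\varphi_j(x) := \varphi_0(\lambda^{-j} x)$ for $j \in \mathbb{Z}$, so that the $\varphi_j$ are nonnegative with pairwise disjoint supports, contained in the annuli $\{\lambda^j < |x| < \lambda^j \lambda_0\}$. Because $\partial E$ is a cone and the interaction kernel is homogeneous, both $Q_E$ and its polarization $B_E$ scale explicitly; in particular, after the renormalization $\tilde\varphi_j := \lambda^{sj}\varphi_j$ one computes $B_E(\tilde\varphi_j, \tilde\varphi_k) = g(k - j)$ for an even function $g : \mathbb{Z} \to \R$ with $g(0) = Q_E(\varphi_0) < 0$. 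For $m \ne 0$ one has $g(m) \le 0$, since $\tilde\varphi_0, \tilde\varphi_m \ge 0$ have disjoint supports and there $B_E$ reduces to the off-diagonal, manifestly nonpositive, part of the Gagliardo double integral; moreover a direct estimate — the two supports lie at mutual distance comparable to $\lambda^{|m|}$, the kernel decays like $|x - y|^{-2 - 2s}$, and the renormalization contributes only a factor $\lambda^{s|m|}$ — yields $|g(m)| \le C \lambda^{-(1+s)|m|}$. In particular $g \in \ell^1(\mathbb{Z})$.

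Now, for any finitely supported real sequence $(b_j)$, bilinearity of $Q_E$ gives $Q_E\big(\sum_j b_j \tilde\varphi_j\big) = \sum_{j,k} b_j b_k\, g(k - j)$, which is precisely the Toeplitz quadratic form with symbol $\hat g(\theta) = \sum_m g(m) e^{im\theta}$. Since $g(0) < 0$ and $g(m) \le 0$ for every $m$, we get $\hat g(0) = \sum_m g(m) \le g(0) < 0$, and $\hat g$ is continuous because $g \in \ell^1$, so $\hat g < 0$ on a nonempty open arc. By Plancherel, this Toeplitz form is negative on finitely supported sequences whose Fourier transforms are real trigonometric polynomials concentrated on a symmetric sub-arc of $\{\hat g < 0\}$, and such subspaces exist in every finite dimension $N$. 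Each one produces an $N$-dimensional subspace of test functions supported in $\R^2 \setminus \{0\}$ — spanned by the disjointly supported $\varphi_j$, hence linearly independent — on which $Q_E < 0$. This contradicts finite Morse index, so $E$ is stable in $\R^2 \setminus \{0\}$, and Theorem~\ref{thm: main} completes the argument.

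The crux, and the only genuinely nonlocal difficulty, is that disjointly supported test functions still interact through $B_E$, so one cannot simply sum the negative contributions as in the classical case; the counting of negative directions must instead go through the structure of the interaction. The argument gets around this using two fractional features: the off-diagonal interaction of \emph{nonnegative} test functions has the right (nonpositive) sign, and the homogeneity of the cone turns the relevant quadratic form into an honest Toeplitz form after renormalization, whose symbol is then analyzed by elementary Fourier analysis. The only quantitative input is the $\ell^1$ decay of $g$, which holds for every $s \in (0,1)$; the restriction to $s < s_\circ$ in the statement of the corollary comes solely from the appeal to Theorem~\ref{thm: main}.
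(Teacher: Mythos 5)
Your overall strategy is the same as the paper's in outline --- reduce the corollary to the claim that a finite-index $s$-minimal cone is stable in $\R^2\setminus\{0\}$ (the paper's Proposition \ref{prop: fmi implies stable outside}) and then invoke Theorem \ref{thm: main} --- but your proof of that claim is genuinely different in execution. The paper establishes a weighted almost-stability property for disjointly supported vector fields (Lemma \ref{lem: weigthed as}), verifies ${\rm Per}_s(E;B_1)<\infty$ via \cite{Enric}, and then delegates the scaling argument to \cite[Lemma 4.15]{CFSYau} (Lemma \ref{lem: stable cones CFS}), where the nonlocal interactions between rescaled copies of an unstable variation are made \emph{small} by a suitable choice of weights and separations. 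You instead exploit a \emph{sign} rather than smallness: passing to $|\varphi_0|$ makes every off-diagonal interaction between the disjointly supported dilates nonpositive, and the homogeneity of the cone turns the quadratic form on their span into a Toeplitz form with summable, nonpositive off-diagonal entries and negative diagonal, which must carry arbitrarily many negative directions. This is self-contained (no appeal to \cite{CFSYau}), needs no tuning of $\lambda$ or weights, and correctly identifies that in $\R^2$ the $C^2$ hypothesis built into the Morse-index definition forces finitely many rays, so the finiteness of mass/perimeter that the paper must check separately is automatic here.

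A few repairable slips: the renormalization should be $\tilde\varphi_j=\lambda^{sj/2}\varphi_j$, not $\lambda^{sj}\varphi_j$ --- since $B_E(\varphi_j,\varphi_k)=\lambda^{-sj}B_E(\varphi_0,\varphi_{k-j})$, only the exponent $s/2$ makes $B_E(\tilde\varphi_j,\tilde\varphi_k)$ a function of $k-j$; the kernel on $\partial E\subset\R^2$ is $|x-y|^{-(2+s)}$, not $|x-y|^{-2-2s}$ (with the corrected renormalization one gets $|g(m)|\le C\lambda^{-(1+s/2)|m|}$, still summable); and $|\varphi_0|$ is only Lipschitz, so it should be mollified, keeping $Q_E<0$ and support away from the origin, before using the second variation formula of Theorem \ref{thm: second var}. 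Finally, the Plancherel/concentration step is only sketched and is the least precise part (finitely supported sequences only approximately concentrate on an arc); it can be replaced by a one-line count: the $N\times N$ Toeplitz matrix $(g(k-j))_{j,k}$ has trace $Ng(0)<0$ and operator norm at most $\sum_m|g(m)|$, hence at least $N|g(0)|/\sum_m|g(m)|$ negative eigenvalues, which tends to infinity with $N$, contradicting finite Morse index. With these adjustments your argument is a valid alternative to the paper's route; as you note, the restriction $s<s_\circ$ enters only through Theorem \ref{thm: main}.
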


Here and in the rest of this work, by finite Morse index, we mean with respect to the following notion introduced in \cite{CFSYau, Enric}. 

   \begin{definition}[Morse index]\label{def: Morse index}
       Let $\mathcal{U} \subset \R^n $ be an open set and $E$ be an $s$-minimal surface in $\mathcal{U}$ (see Definition \ref{def: smin surf}). We say that $ E$ has Morse index at most $m$ in $\mathcal{U}$ if for every bounded Lipschitz domain $ \Omega \Subset \mathcal{U} $, for every $(m+1)$ vector fields $X_1, \dotsc, X_{m+1}\in C^2_c(\Omega; \R^n)$, there exists coefficients $a_1, \dotsc , a_{m+1} \in \R $ such that $a_1^2+ \dotsc + a_{m+1}^2=1$ and 
       \begin{equation*}
           \delta^2 {\rm Per}_s( E, \Omega)[a_1X_1+\dotsc + a_{m+1} X_{m+1}] \ge 0.
       \end{equation*}
   \end{definition}
 
In the \ref{sec: appendix}, we give a direct proof that the cross $\textbf{X}$ is unstable in $\R^2\setminus \{0\}$, for $s$ close to $0$. Since $\textbf{X}$ is not flat, this fact is already contained in Theorem \ref{thm: main}. Nevertheless, we provide a very short independent proof since we believe it captures the main idea in the proof of Theorem \ref{thm: main}.

\subsection{Min-max curves and model singularities} \label{sbs: min-max}

On a closed Riemannian manifold $(M^n,g)$, the volume spectrum, introduced by Gromov, is a sequence of geometric invariants $\{\omega_p(M,g)\}_{p\in \N}$ called 
$p$-widths, which can be thought of as a nonlinear analog of the spectrum of the Laplacian. We refer to \cite[Section 2.2]{JLD} or \cite[Section 2]{CMsurface} for the precise definition of the $p$-widths. These $p$-widths play a crucial role in the theory of minimal hypersurfaces. In ambient dimension $3\le n \le 7$, each $p$-width equals the weighted area of the union of disjoint, connected, smooth, closed, embedded minimal hypersurfaces. These hypersurfaces can be chosen to satisfy a bound on their Morse index, meaning that the sum of the Morse indices of the connected components is at most $p$. 

\vsp 
For $n=2$, the situation is different as min-max methods on surfaces typically only produce stationary geodesic networks with regular support up to finitely many points (e.g., \cite{Pittsmmnets}), making the standard index control techniques ineffective. In this direction, Chodosh and Mantoulidis recently achieved a significant breakthrough in \cite{CMsurface}, showing that on surfaces, the $p$-widths are achieved by finite unions of closed immersed geodesics rather than simply geodesic nets.

\vsp 
In the case of geodesics on surfaces, the regularity of these objects cannot be improved, meaning that even for generic metrics, the min-max scheme will produce immersed geodesics that are not embedded. At every self-intersection point, the tangent cone to these geodesics consists of a finite union of distinct lines intersecting transversely, which cannot be ruled out. 

\vsp 
On the other hand, in this case of ambient dimension two, these multiple-junction model singularities (where the tangent cone consists of a finite union of distinct lines) are the only obstruction to a complete regularity of these geodesic nets arising from a min-max scheme. If one knew that there are no points of multiple-junction, then the net would be a finite union of disjoint, closed, embedded geodesics. This is even true in higher dimensions under suitable hypotheses. Indeed, by a deep result by Wickramasekera \cite{Wick}, for $3\le n \le 7$, any stationary, stable on its regular part codimension $1$ varifold without multiple-junctions is smooth (to say, it is supported on a finite union of disjoint,
smooth, embedded, connected hypersurfaces). 

\vsp 
Let us now turn to the implications of this work to ``fractional geodesics''. Recall that, similarly to the terminology used for sets of finite perimeter (e.g., \cite[Part II]{MaggiBook}), an $s$-minimal surface is, to be precise, a set $E \subset M$ with finite $s$-perimeter and zero first variation (Definition \ref{def: smin surf} below). Nevertheless, with a bit of abuse of the notion, we often refer to just its boundary $\partial E$ as ``the'' surface, which is a codimension one object.  

\vsp 
The main result of this work, together with the one by the author and collaborators in \cite{CFSYau}, implies that the situation for the fractional analog of the volume spectrum is, for $s$ small and $n=2$, drastically different from the classical one of geodesics. We refer to \cite{Enric, CFSYau} for the precise definition of the fractional widths $\{\ell_{s,p}(M,g)\}_{p\in \N}$ and for the proof that these are indeed attained by $s$-minimal surfaces $E^s_p$ with Morse index at most $p$ on $M$, in the sense of Definition \ref{def: Morse index} above. Moreover, by \cite[Proposition 3.27]{CFSYau}, these surfaces are slightly more than stationary for inner variations: they are viscosity solutions of the NMS (i.e., Nonlocal Minimal Surface) equation. 

\vsp 
Since having Morse index at most $p$ is a property that is stable under blow-up, by the monotonicity formula for $s$-minimal surfaces (see \cite[Lemma 6.2]{MSW} or \cite[Theorem 3.4]{CFSfrac}) and the BV-estimate in the finite Morse index case \cite[Theorem 5.4]{Enric} we have that, for every $x\in \partial E^s_p$, any blow-up of $E^s_p$ around $x$ is a cone in $\R^n$ of finite Morse index in $\R^n \setminus \{0\}$. Thus, if the ambient Riemannian manifold is two-dimensional and $s\in (0, s_\circ)$, where $s_\circ$ is the constant of Theorem \ref{thm: main}, every such cone is a half-plane. This fact, together with the improvement of flatness theorem for viscosity solutions of the NMS equation in \cite{CRS} (see also \cite[Theorem 4.14]{CFSYau}), implies that $\partial E^s_p$ has a unique flat tangent cone at every point. Then, one can deduce that $\partial E^s_p$ is smooth by arguing exactly as in the first part of the proof of \cite[Theorem 4.12]{CFSYau}. We refer the reader to \cite[Section 4]{CFSYau} for all the details of this blow-up procedure to show regularity. 

\vsp 
Hence, on a Riemannian surface $(M^2,g)$ and for $s$ small, the fractional widths $\{\ell_{s,p}(M^2,g)\}_{p\in \N}$ are attained by smooth $s$-minimal surfaces, that are a finite union of smooth embedded curves. 

\subsection{On the different notions of stability}\label{sbs: diff stability} 
In recent years, two different notions of stability for $s$-minimal surfaces have been used in the literature \cite{CSV, CCScones}. We refer to \cite[Section 2]{CCScones} for a discussion on these two notions of stability and why a second notion (other than the natural one coming from inner variations) was developed. For the sake of clarity, we recall their major difference here and the role played in this work. 

\vsp 
First, we have the natural notion of stability coming from inner variations of the set, which we call just \emph{stability} and is our Definition \ref{def: stability}. This is the usual notion of stability, which is also used in the context of stationary varifolds.

\vsp 
Secondly, we have a stronger notion of stability coming from \emph{``outer rearrangements''} of the set.

\begin{definitionA}[Definition 1.6 in \cite{CSV}]\label{def: defa} Let $\Omega \subset \R^n$ be a bounded open set and $E$ be a set with ${\rm Per}_s(E; \Omega)<+\infty$. Then, $E$ is said to be stable by rearrangements in $\Omega$ if, for every vector field $X\in C_c^2(\Omega; \R^n)$ there holds
    \begin{equation*}
    \liminf_{t\to 0^+} \frac{1}{t^2} \big( {\rm Per}_s( \phi_t^X(E) \cap E , \Omega) - {\rm Per}_s( E, \Omega) \big) \ge 0  , 
\end{equation*}  
and 
\begin{equation*}
    \liminf_{t\to 0^+} \frac{1}{t^2} \big( {\rm Per}_s( \phi_t^X(E) \cup E , \Omega) - {\rm Per}_s( E, \Omega) \big) \ge 0  , 
\end{equation*} 
where $\phi_t^X : \Omega \to \Omega$ is the flow of $X$ at time $t>0$.
\end{definitionA}

In \cite{CSV}, this notion is called just ``stability", but we believe this terminology to be slightly misleading for the reason that follows. While for sets with $C^2$ boundary in $\Omega$ and $s\in (0,1]$, Definition A is known to be equivalent to Definition \ref{def: stability} (see \cite[Remark 3.2]{CCScones} for a proof of this fact), for singular objects they do not coincide in general. In particular, stability by rearrangements (Definition A) allows to infinitesimally break the topology of the set $E$, while classical stability by inner variations does not. We emphasize that the notion of stability by rearrangements was developed ad-hoc to get rid of cross-like singularities directly from the definition. 

\begin{remark}
    In the case of the classical perimeter (formally $s=1$), these two notions of stability are indeed different, since the cross $\textbf{X}$ is stable in $\R^2$ but is not stable by rearrangements in $\R^2$. Nevertheless, since $\textbf{X}$ is smooth outside the origin, $\textbf{X}$ is stable for both notions in $\R^2 \setminus \{0\}$. 
\end{remark}

Observe that our Definition \ref{def: stability} is weaker than Definition A. Indeed, assume that $E$ is stable by rearrangements in $\Omega$. For every $X\in C_c^2(\Omega; \R^n)$, by the elementary inequality
\begin{equation*}
    {\rm Per}_s( \phi_t^X(E) , \Omega) + {\rm Per}_s( E, \Omega) \ge {\rm Per}_s( \phi_t^X(E) \cap E, \Omega) + {\rm Per}_s( \phi_t^X(E) \cup E , \Omega) ,  
\end{equation*}
it follows that
\begin{equation}\label{eq: liminf Def A}
     \liminf_{t\to 0^+} \frac{1}{t^2} \big( {\rm Per}_s( \phi_t^X(E) , \Omega) - {\rm Per}_s( E, \Omega) \big) \ge 0 . 
\end{equation}
By \cite[Lemma 3.11]{CFSfrac}, the map $t\mapsto {\rm Per}_s( \phi_t^X(E), \Omega)$ is of class $C^2$ for all $t>0$. Hence, the limits in
    \begin{equation*}
        a_1 := \frac{d}{d t} \bigg|_{t=0} {\rm Per}_s( \phi_t^X(E), \Omega) \s \mbox{and} \s a_2 := \frac{d^2}{d t^2} \bigg|_{t=0} {\rm Per}_s( \phi_t^X(E), \Omega)
    \end{equation*}
    exist. It follows that $a_1=0$ (assuming the opposite, for some $X \in C_c^2(\Omega; \R^n)$ one would get the $\liminf$ in \eqref{eq: liminf Def A} to be $-\infty$), and consequently 
    \begin{equation*}
       \frac{a_2}{2} = \lim_{t\to 0^+} \frac{1}{t^2} \big( {\rm Per}_s( \phi_t^X(E) , \Omega) - {\rm Per}_s( E, \Omega) - t a_1  \big) = \lim_{t\to 0^+} \frac{1}{t^2} \big( {\rm Per}_s( \phi_t^X(E) , \Omega) - {\rm Per}_s( E, \Omega) \big)  \ge 0 .
    \end{equation*}
    Thus, $E$ is stable in $\Omega$ for Definition \ref{def: stability}.

\begin{remark} To be precise, in \cite{CCScones} the authors take \eqref{eq: liminf Def A} as their definition of stability. Even though this seems to be slightly different from the notion of stability used in this work (Definition \ref{def: stability}), since $t\mapsto {\rm Per}_s( \phi_t^X(E), \Omega)$ is of class $C^2$ for $X\in C_c^2(\Omega; \R^n)$, by the argument above the two notions are equivalent without any additional hypotheses on the set $E$. 
\end{remark}

To explain the surprising part of this work further, let us start by recalling the precise statement of the two available results in the literature on the classification of $s$-minimal cones in $\R^2$. 

\begin{theorem}[\hspace{.001pt}\cite{SV2min, SV1mon}]\label{thm: min cones classification} Let $s\in (0,1)$ and $E\subset \R^2$ be an $s$-minimal cone minimizing the $s$-perimeter in compact sets of $\R^2$. Then, $E$ is a half-plane. 
\end{theorem}

\begin{theorem}[Corollary 1.16 in \cite{CSV}]\label{thm: csv class} Let $s\in (0,1)$ and $E\subset \R^2$ be an $s$-minimal cone that is stable by rearrangements in $\R^2$ (i.e., for Definition A for every $\Omega \Subset \R^2$). Then, $E$ is a half-plane. 
\end{theorem}

Since every set that minimizes the $s$-perimeter in compact sets is stable by rearrangements, we see that Theorem \ref{thm: csv class} is stronger than the previous Theorem \ref{thm: min cones classification}. The key point for which our result is, in the range $s$ small, more surprising than Theorem \ref{thm: csv class} is the following: Theorem \ref{thm: csv class} assumes stability (by rearrangement) also around the vertex of the cone, while our Theorem \ref{thm: main} assumes stability just in $\R^2 \setminus \{0\}$. This is a major difference since Theorem \ref{thm: csv class} does not hold only assuming that $E$ is stable by rearrangements in $\R^2 \setminus \{0\}$. Actually, this classification of cones in the plane stable in $\R^2 \setminus \{0\}$ is not even expected to be true for all $s\in (0,1)$, since the cross $\textbf{X}$ is expected to be stable in $\R^2 \setminus \{0\}$ for the $s$-perimeter and $s$ close to $1$, in accordance with the case of the classical perimeter. 

\vsp 
In this regard, our classification result Theorem \ref{thm: main} for cones in the plane stable in $\R^2\setminus \{0\}$ is of purely nonlocal nature and represents a remarkable difference from the theory of the classical perimeter.

\section{Preliminary tools} 

Our proof relies on the Hardy inequality for the $H^\sigma(\mathbb{R})$ seminorm and, specifically, on the asymptotic behavior of its optimal constant as $\sigma \downarrow 1/2$. The sharp constant in this inequality has been established in \cite{HardyIneq} (equation (1.6)), and it is also stated in \cite[Theorem 3.3]{CCScones}. We will also use the fact that radially symmetric functions in $C^2_c(\mathbb{R}\setminus \{0\})$ nearly saturate Hardy's inequality; this is proved in Section 3.3 of \cite{HardyIneq}.

 \begin{theorem}[Hardy's inequality] Let $n\ge 1$, $\sigma \in (0, 1 )$ and $u\in H^\sigma_0(\R^n \setminus \{0\})$. Then 

\begin{equation*}
    \ha _{n, \sigma} \int_{\R^n} \frac{u^2}{|x|^{2\sigma}} \, dx \le c_{n,\sigma} \iint_{\R^n \times \R^n} \frac{|u(x)-u(y)|^2}{|x-y|^{n+2\sigma}} \, dxdy , 
\end{equation*}
where 
\begin{equation*}
    \ha _{n,\sigma} = 2^{2\sigma-1}(\sigma-n/2)^2 \frac{\Gamma(n/4+\sigma/2)^2}{\Gamma(n/4-\sigma/2+1)^2} ,
\end{equation*}
and 
\begin{equation*}
   c_{n,\sigma} = 2^{2\sigma-1} \pi^{-n/2} \frac{\Gamma(n/2+\sigma)}{\Gamma(2-\sigma)} \sigma (1-\sigma) .
\end{equation*}

Moreover, the inequality is saturated by radial functions, that is: for every $\ep>0$ there exists a radial function $\xi(x) = \xi(|x|) \in C^2_c(\R^n \setminus \{0\})$ such that
\begin{equation*}
    \big( \ha _{n,\sigma} + \ep \big) \int_{\R^n} \frac{\xi(x)^2}{|x|^{2\sigma}} \, dx \ge c_{n,\sigma} \iint_{\R^n \times \R^n} \frac{|\xi(x)-\xi(y)|^2}{|x-y|^{n+2\sigma}} \, dxdy .
\end{equation*}
     
 \end{theorem}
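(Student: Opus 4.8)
The route I would take is the \emph{ground state representation} of Frank--Seiringer type: exhibit the ``virtual minimizer'' of the Hardy quotient $\iint \tfrac{|u(x)-u(y)|^2}{|x-y|^{n+2\sigma}}/\int u^2|x|^{-2\sigma}$, perform the corresponding substitution inside the Gagliardo seminorm, and read off both the sharp constant and the near-extremals. Assume $2\sigma\neq n$ (for $n=1$, $\sigma=1/2$ one has $\ha_{1,\sigma}=0$ and the inequality is vacuous); the range that matters for this note is $n=1$, $\sigma\in(1/2,1)$. First I would set $w(x):=|x|^{-(n-2\sigma)/2}$, which is smooth and positive on $\R^n\setminus\{0\}$. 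By the classical formula for the fractional Laplacian of a homogeneous function,
\begin{equation*}
  (-\Delta)^{\sigma}\big(|x|^{-\beta}\big)=2^{2\sigma}\,\frac{\Gamma\!\left(\tfrac{\beta+2\sigma}{2}\right)\Gamma\!\left(\tfrac{n-\beta}{2}\right)}{\Gamma\!\left(\tfrac{\beta}{2}\right)\Gamma\!\left(\tfrac{n-\beta-2\sigma}{2}\right)}\;|x|^{-\beta-2\sigma}\qquad\text{on }\R^n\setminus\{0\},
\end{equation*}
the choice $\beta=(n-2\sigma)/2$ gives $(-\Delta)^{\sigma}w=\Lambda_{n,\sigma}\,|x|^{-2\sigma}\,w$ with $\Lambda_{n,\sigma}=2^{2\sigma}\Gamma(\tfrac{n+2\sigma}{4})^2\Gamma(\tfrac{n-2\sigma}{4})^{-2}$; the relation $\Gamma(\tfrac{n-2\sigma}{4}+1)=\tfrac{n-2\sigma}{4}\,\Gamma(\tfrac{n-2\sigma}{4})$, together with $(\sigma-n/2)^2=4\,(\tfrac{n-2\sigma}{4})^2$, is what, after accounting for the normalization constant $c_{n,\sigma}$ relating the Gagliardo seminorm to $(-\Delta)^\sigma$, produces the value $\ha_{n,\sigma}$ of the statement.

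\smallskip
\noindent\emph{Substitution.} For $u\in C^{\infty}_{c}(\R^{n}\setminus\{0\})$ I would write $u=w\phi$ with $\phi:=u/w\in C^{\infty}_{c}(\R^{n}\setminus\{0\})$. Starting from the elementary pointwise identity
\begin{equation*}
  |u(x)-u(y)|^{2}=w(x)w(y)\,|\phi(x)-\phi(y)|^{2}+\phi(x)^{2}w(x)\big(w(x)-w(y)\big)+\phi(y)^{2}w(y)\big(w(y)-w(x)\big) ,
\end{equation*}
I divide by $|x-y|^{n+2\sigma}$, integrate over $\R^{n}\times\R^{n}$, symmetrize the last two terms in $(x,y)$, and evaluate $\mathrm{p.v.}\!\int_{\R^n}\tfrac{w(x)-w(y)}{|x-y|^{n+2\sigma}}\,dy$ as the appropriate multiple of $(-\Delta)^\sigma w(x)=\Lambda_{n,\sigma}|x|^{-2\sigma}w(x)$; keeping track of the normalization constant, one arrives at the \emph{ground state identity}
\begin{equation*}
  c_{n,\sigma}\iint_{\R^{n}\times\R^{n}}\frac{|u(x)-u(y)|^{2}}{|x-y|^{n+2\sigma}}\,dxdy-\ha_{n,\sigma}\int_{\R^{n}}\frac{u^{2}}{|x|^{2\sigma}}\,dx=c_{n,\sigma}\iint_{\R^{n}\times\R^{n}}\frac{|\phi(x)-\phi(y)|^{2}}{|x-y|^{n+2\sigma}}\,w(x)w(y)\,dxdy\ \ge\ 0 .
\end{equation*}
This is the inequality on $C^{\infty}_{c}(\R^{n}\setminus\{0\})$; since $H^{\sigma}_{0}(\R^{n}\setminus\{0\})$ is the closure of this space and both sides are continuous along the approximation, it passes to every admissible $u$.

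\smallskip
\noindent\emph{Saturation by radial functions.} Equality in the ground state identity would force $\phi$ constant, i.e.\ $u$ proportional to $w$, which is not in the energy space --- but it can be approached. I would test with $u_{R}:=w\,\eta_{R}$, where $\eta_{R}(x)=\eta_{R}(|x|)\in C^2_c(\R^n\setminus\{0\})$ is a radial cutoff equal to $1$ on $\{R^{-1}\le|x|\le R\}$, supported in $\{R^{-2}\le|x|\le R^{2}\}$, and affine in $\log|x|$ on the two transition annuli (up to smoothing at the corners, so that $|\nabla\eta_R|\lesssim(|x|\log R)^{-1}$). A scaling computation yields $\int_{\R^n}u_{R}^{2}|x|^{-2\sigma}\,dx\sim c\log R\to\infty$, while the weighted Gagliardo seminorm of $\eta_{R}$ on the right of the ground state identity is invariant under dilations --- hence only feels the flattening of the ramp --- and is $o(\log R)$; this is precisely the estimate carried out in Section~3.3 of \cite{HardyIneq}. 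Dividing the ground state identity by $\int u_{R}^{2}|x|^{-2\sigma}$, the Rayleigh quotient of $u_R$ tends to $\ha_{n,\sigma}/c_{n,\sigma}$ from above, so taking $R=R(\ep)$ large enough produces the desired radial $\xi$ (the degenerate case $\ha_{n,\sigma}=0$ needs only a minor variant, also in \cite{HardyIneq}).

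\smallskip
The delicate step is the rigorous execution of the substitution: $w$ lies in no $H^{\sigma}$ and, when $2\sigma>n$, is unbounded at infinity, so the principal-value integral defining $(-\Delta)^{\sigma}w$ and its interchange with the double integral over $\R^n\times\R^n$ must be justified by cutting the domain near the diagonal, near the origin, and near infinity, and one must push the Beta/Gamma bookkeeping all the way through so that the algebra lands exactly on $\ha_{n,\sigma}$ and $c_{n,\sigma}$. The saturation step, by contrast, is soft, resting only on the dilation invariance of the weighted seminorm and on dominated convergence as $R\to\infty$.
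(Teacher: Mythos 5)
The paper does not prove this theorem: it is stated as a preliminary tool and attributed to \cite{HardyIneq} (equation (1.6)) for the sharp constant and to Section~3.3 of \cite{HardyIneq} for the radial saturation; there is thus no in-paper proof to compare against. Your sketch reconstructs exactly the Frank--Seiringer ground-state representation used in that reference: the pointwise identity $|u(x)-u(y)|^{2}=w(x)w(y)|\phi(x)-\phi(y)|^{2}+\phi(x)^{2}w(x)(w(x)-w(y))+\phi(y)^{2}w(y)(w(y)-w(x))$ with $u=w\phi$ is correct, the symmetrization and the eigenfunction property $(-\Delta)^{\sigma}w=\Lambda_{n,\sigma}|x|^{-2\sigma}w$ for $w=|x|^{-(n-2\sigma)/2}$ are the right ingredients, and the logarithmic cutoff construction for near-extremals is exactly what \cite{HardyIneq} does (your observation that the weighted Gagliardo seminorm in the ground-state identity is dilation invariant, while $\int u_R^2|x|^{-2\sigma}\sim\log R$, is the crux). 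One word of caution on the bookkeeping you defer: carried out with the convention $\int u(-\Delta)^{\sigma}u = c_{n,\sigma}[u]^2_{H^{\sigma}}$, the substitution produces the Herbst constant $\Lambda_{n,\sigma}=2^{2\sigma}\Gamma(\tfrac{n}{4}+\tfrac{\sigma}{2})^2\Gamma(\tfrac{n}{4}-\tfrac{\sigma}{2})^{-2}$, which differs from the displayed $\ha_{n,\sigma}$ by a benign normalization factor; since the argument in Section~2 of the paper only uses the asymptotics $\ha_{1,(1+s)/2}/c_{1,(1+s)/2}\lesssim s^{2}$, this has no effect on anything downstream, but you should verify the precise conventions of \cite{HardyIneq} if you want the constants to match literally. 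You are also right to flag the regime $2\sigma>n$ (the one actually needed here, $n=1$, $\sigma=(1+s)/2$): there $w$ grows at infinity, so the principal-value definition of $(-\Delta)^{\sigma}w$ and its use inside the double integral require the truncation argument you mention; that justification is the real content of the proof in \cite{HardyIneq} and would have to be written out in full.
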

 
 In particular, for $n=1$, $s\in (0,1/2)$ and $\sigma = \frac{1+s}{2} $ the constant reads as
     \begin{equation*}
    \ha _{1,\tfrac{1+s}{2}} =  s^2 2^{s-2} \frac{\Gamma(\frac{2+s}{4})^2}{\Gamma(\frac{4-s}{4})^2} ,
\end{equation*}
and, by elementary properties of the Gamma function, it can be easily checked that
\begin{equation*}
   \frac{\ha _{1,\tfrac{1+s}{2}}}{c_{1,\tfrac{1+s}{2}}} \le \frac{Cs^2}{C\tfrac{1+s}{2}(1-\tfrac{1+s}{2})} \le Cs^2 , 
\end{equation*}
for some absolute $C>0$ and every $s\in (0,1/2)$. 

\vsp 
Summarizing, taking the $\xi$ relative to $\ep= \ha _{1,\tfrac{1+s}{2}} $ in the saturation statement above, for every $s\in (0,1/2)$ there exists an even function $\xi \in C^2_c(\R\setminus \{0\})$ such that
\begin{equation*}
    \int_\R \frac{\xi(x)^2}{|x|^{1+s}} \, dx \ge  \frac{1}{Cs^2 } \iint_{\R \times \R}  \frac{|\xi(x)-\xi(y)|^2}{|x-y|^{2+s}} \, dxdy .
\end{equation*}
Lastly, for the same $\xi$, this directly implies
\begin{equation}\label{eq: Hardy sat}
    \int_0^\infty \frac{\xi(x)^2}{x^{1+s}} \, dx \ge  \frac{1}{Cs^2 } \int_0^\infty \hspace{-4pt} \int_0^\infty   \frac{|\xi(x)-\xi(y)|^2}{|x-y|^{2+s}} \, dxdy .
\end{equation}

Before stating precisely all the tools that we will need on the first and second variation of the fractional perimeter, we recall the notion of fractional $s$-perimeter, which was introduced by Caﬀarelli, Roquejoﬀre, and Savin in \cite{CRS}.

 \begin{definition}\label{def: frac per}
    For $s\in (0,1)$, the fractional perimeter (or $s$-perimeter) of a measurable set $E\subset \R^n$ is defined as
    \begin{equation*}
         {\rm Per}_s(E) := \frac{1}{2} [\chi_E ]^2_{H^{s/2}(\R^n)} =  \iint_{E\times E^c} \frac{1}{|x-y|^{n+s}} \,dxdy .
    \end{equation*}
\end{definition}

The $s$-perimeter also has a natural localized version in a bounded open set $\Omega \subset \R^n$, in the same spirit of the localized fractional Sobolev spaces $H^s(\Omega)$. This is of use because, for example, one would like to say that a hyperplane in $\R^n$ is an $s$-minimal surface (see Definition \ref{def: smin surf} below) even though a half-space has infinite $s$-perimeter for Definition \ref{def: frac per}. 

\begin{definition}
    Let $s\in (0,1)$ and $\Omega \subset \R^n$ be a bounded, open set. The fractional perimeter (or $s$-perimeter) of a measurable set $E $ in $\Omega$ is defined as
    \begin{align*}
         {\rm Per}_s(E , \Omega) : = \frac{1}{2} \iint_{\R^n \times \R^n \setminus \Omega^c \times \Omega^c} \frac{|\chi_E(x)-\chi_E(y)|^2}{|x-y|^{n+s}} \,dxdy .
    \end{align*}
\end{definition}

Note that for $\Omega = \R^n$ we recover Definition \ref{def: frac per}.

\begin{definition}[$s$-minimal surface]\label{def: smin surf} Given $\mathcal{U} \subset \R^n$ open, a set $E$ is said to be an $s$-minimal surface in $\mathcal{U}$ if for every bounded Lipschitz domain $ \Omega \Subset \mathcal{U} $ we have ${\rm Per}_s( E, \Omega )<+\infty$ and
\begin{equation*}
    \frac{d}{d t} \bigg|_{t=0} {\rm Per}_s( \phi_t^X(E), \Omega ) =0 , \s\s \forall \, X\in C_c^2(\Omega ; \R^n) , 
\end{equation*}    
where $\phi_t^X : \Omega \to \Omega $ is the flow of $X$ at time $t>0$.
\end{definition}

By the first variation formula (e.g, \cite[Theorem 6.1]{FFMMM} or \cite[Proposition 5.1]{Enric}), if $E$ is an $s$-minimal surface in $\R^n$ and $\partial E$ is of class $C^2$ then 
\begin{equation}\label{eq: fvar}
     P.V. \ns \int_{\R^n} \frac{\chi_{E^c}(y)-\chi_E(y)}{|x-y|^{n+s}} \, dy = 0 , \s\s \forall \, x\in \partial E . 
\end{equation}
The left-hand side is denoted by $H_s^E(x)$ and called the \emph{nonlocal mean curvature} of $E$ at $x$.

\begin{remark}\label{rem: nbh first var} Inspecting the proof of the first variation formula in \cite{FFMMM, Enric}, it follows that if $\partial E$ is $C^2$ just in a neighborhood of $x\in \partial E$ and is globally Lipschitz, then \eqref{eq: fvar} holds at $x$. 
\end{remark}

It is well known that, for sets with $C^2$ boundary, the nonlocal mean curvature can be expressed as a boundary integral (see, for example, the introduction of \cite{CabreFall} or \cite{CirFig}). Precisely 
 \begin{equation}\label{eq: boundary int formula}
        H_s^E(x) = \frac{2}{s} P.V. \ns  \int_{\partial E} \frac{(y-x) \cdot \nu_{\partial E}(y)}{|y-x|^{n+s}} \, d\sigma(y) ,
    \end{equation}
    where $\nu$ denotes the outer unit normal to $\partial E$. 

    \vsp 
In the proof of our main theorem, we will need to use this fact for a cone in $\R^2$, which is not necessarily $C^2$ globally. The validity of this representation formula \eqref{eq: boundary int formula} at some $x\in \partial E$ does not really require the smoothness of $\partial E$ everywhere, but just smoothness in a neighborhood of $x$ and mild regularity of $\partial E$ outside this neighborhood. Since the author could not find any precise statement of this fact, and since we will crucially need it, we state and prove it here. 

\begin{lemma}\label{lem: boundary int formula} Let $s\in (0,1)$, and let $E\subset \R^n$ be a set that is a finite union of open domains with Lipschitz boundary. Let $x\in \partial E$ and assume that $\partial E$ is $C^2$ in a neighborhood of $x$. Then $H_s^E(x)$ can be expressed as in \eqref{eq: boundary int formula}. 
\end{lemma}

\begin{proof}
    Fix $\ep>0$ small. As the statement is invariant by translations, we can assume $x=0 \in \partial E$. Observe that 
    \begin{equation*}
     \dive \left( \frac{y}{|y|^{n+s}} \right) = -  \frac{s}{|y|^{n+s}} . 
\end{equation*}
We want to apply the divergence theorem to the vector field $F \in C^1(\R^n \setminus B_\ep; \R^n) $ defined by $F(y)= y|y|^{-(n+s)}$, but this has not compact support. Nevertheless, since $|F| = o(|y|^{1-n})$ as $|y| \to + \infty$, it is a classical fact that the divergence theorem can be applied (e.g., \cite[Corollary 5.2]{Langbook}). Hence, by the divergence theorem, we can infer
\begin{equation*}
    \int_{\partial(E\setminus B_\ep)} \frac{y \cdot \nu(y)}{|y|^{n+s}} \, d\sigma(y) =  \int_{E\setminus B_\ep} \dive \left( \frac{y}{|y|^{n+s}} \right)  dy = - s \int_{E\setminus B_\ep} \frac{1}{|y|^{n+s}} \, dy , 
\end{equation*}
where $\nu$ is the outer unit normal to $\partial(E\setminus B_\ep)$. Similarly 
\begin{equation*}
    \int_{\partial(E^c \setminus B_\ep)} \frac{y \cdot \widetilde \nu(y)}{|y|^{n+s}} \, d\sigma(y) = - s \int_{E^c\setminus B_\ep} \frac{1}{|y|^{n+s}} \, dy , 
\end{equation*}
where $\widetilde \nu$ is the outer unit normal to $\partial(E^c\setminus B_\ep)$. Note that 
\begin{equation*}
    \partial(E\setminus B_\ep) = ( \partial E\setminus B_\ep ) \cup (\partial B_\ep \cap E^c ) , \s \mbox{and } \partial(E^c\setminus B_\ep) = ( \partial E^c\setminus B_\ep ) \cup ( \partial B_\ep \cap E ) , 
\end{equation*}
and that $\widetilde \nu = -\nu $ on the shared part of the boundary $\partial E\setminus B_\ep = \partial E^c\setminus B_\ep$. Thus, subtracting the two equalities above 
\begin{align*}
    s \int_{\R^n \setminus B_\ep} \frac{\chi_{E^c}(y)-\chi_E(y)}{|y|^{n+s}} \, dy &  = \int_{\partial(E\setminus B_\ep)} \frac{y \cdot \nu(y)}{|y|^{n+s}} \, d\sigma(y) - \int_{\partial(E^c \setminus B_\ep)} \frac{y \cdot \widetilde \nu(y)}{|y|^{n+s}} \, d\sigma(y) \\ &= 2 \int_{\partial E \setminus B_\ep} \frac{y \cdot \nu(y)}{|y|^{n+s}} \, d\sigma(y) - \int_{\partial B_\ep \cap E^c} \frac{d\sigma(y)}{|y|^{n+s-1}} + \int_{\partial B_\ep \cap E} \frac{d\sigma(y)}{|y|^{n+s-1}} . 
\end{align*}

Moreover 
\begin{equation*}
    - \int_{\partial B_\ep \cap E^c} \frac{d\sigma(y)}{|y|^{n+s-1}} + \int_{\partial B_\ep \cap E} \frac{d\sigma(y)}{|y|^{n+s-1}} = \frac{1}{\ep^{n+s-1}} \big( \mathcal{H}^{n-1}(\partial B_\ep \cap E) -  \mathcal{H}^{n-1}(\partial B_\ep \cap E^c) \big) , 
\end{equation*}
and it easily follows from the hypothesis that $\partial E$ is $C^2$ in a neighborhood of $0$ that 
\begin{equation*}
    \mathcal{H}^{n-1}(\partial B_\ep \cap E) -  \mathcal{H}^{n-1}(\partial B_\ep \cap E^c) = O(\ep^{n}).
\end{equation*}
Thus 
\begin{equation*}
    - \int_{\partial B_\ep \cap E^c} \frac{d\sigma(y)}{|y|^{n+s-1}} + \int_{\partial B_\ep \cap E} \frac{d\sigma(y)}{|y|^{n+s-1}} = O(\ep^{1-s}) \to 0 , \s \mbox{as } \ep \to 0^+. 
\end{equation*}
Letting $\ep \to 0^+$ above and diving both sides by $s$ gives
\begin{equation*}
    P.V. \ns \int_{\R^n} \frac{\chi_{E^c}(y)-\chi_E(y)}{|y|^{n+s}} \, dy = \frac{2}{s} P.V.  \ns \int_{\partial E } \frac{y \cdot \nu_{\partial E}(y)}{|y|^{n+s}} \, d\sigma(y) , 
\end{equation*}
which is what we wanted to prove. 
\end{proof}

If $\partial E$ is smooth, the second variation can be written in a form very reminiscent of the second variation formula for classical minimal surfaces. This second variation formula for smooth $s$-minimal surfaces was proved in \cite{LawsonCones, FFMMM} , and has recently been generalized to ambient Riemannian manifolds in \cite{Enric}.

\begin{theorem}\label{thm: second var} Let $ E$ be an $s$-minimal surface in $\R^n$, and assume that $\partial E$ is $C^2$ in some bounded open set $\Omega$. Then, for every $X \in C^2_c(\partial E \cap \Omega ; \R^n)$, setting $\varphi := X\cdot \nu_{\partial E}$, we have
     \begin{equation*}
     \delta^2 {\rm Per}_s( E ; \Omega )[X] =   \iint_{\partial E \times \partial E } \left(  \frac{|\varphi(x)-\varphi(y)|^2}{|x-y|^{n+s}} - \frac{|\nu_{\partial E}(x)- \nu_{\partial E}(y)|^2}{|x-y|^{n+s}} \varphi(x)^2 \right) d\sigma(x) d\sigma(y) , 
\end{equation*}
where $\nu_{\partial E}$ is the outer unit normal to $\partial E$. In particular, if $E$ is stable in $\Omega$, there holds
\begin{equation}\label{eq: stab}
     \iint_{\partial E \times \partial E} \frac{|\nu_{\partial E}(x)- \nu_{\partial E}(y)|^2}{|x-y|^{n+s}} \varphi(x)^2 \, d\sigma(x) d\sigma(y)  \le \iint_{\partial E \times \partial E } \frac{|\varphi(x)-\varphi(y)|^2}{|x-y|^{n+s}} \, d\sigma(x) d\sigma(y) ,
\end{equation}
for every $\varphi \in C^2_c(\Omega)$. 
    
\end{theorem}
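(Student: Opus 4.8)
The plan is to differentiate the first variation formula a second time, exploiting that $E$ is stationary to see that only the normal component $\varphi=X\cdot\nu_{\partial E}$ contributes.

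First I would fix a compactly supported $C^2$ extension of $X$ to $\R^n$ (the second variation will not depend on it) and let $\phi_t=\phi_t^X$ be its flow, $E_t:=\phi_t(E)$. By the first variation formula for the $s$-perimeter (see \cite{FFMMM,Enric}), for $|t|$ small
\begin{equation*}
   \frac{d}{dt}{\rm Per}_s(E_t)=\int_{\partial E_t}H_s(E_t,\cdot)\,\big(X\cdot\nu_{\partial E_t}\big)\,d\sigma,
\end{equation*}
where $H_s(E_t,\cdot)$ is the $s$-mean curvature, i.e. the principal value integral on the left of \eqref{eq: fvar}; this is well defined near $\supp X$ because $\partial E$, hence $\partial E_t$ for $|t|$ small, is $C^2$ there. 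Pulling this identity back to $\partial E$ by $\phi_t$ and differentiating once more at $t=0$, the product rule leaves exactly one non-vanishing term: every term in which the $t$-derivative falls on $\big(X\cdot\nu_{\partial E_t}\big)$ or on the tangential Jacobian of $\phi_t|_{\partial E}$ carries the factor $H_s(E,\cdot)$, which vanishes identically on $\supp X\subset\partial E\cap\Omega$ by the Euler--Lagrange equation \eqref{eq: fvar} (valid there by Remark \ref{rem: nbh first var}). Hence
\begin{equation*}
   \delta^2{\rm Per}_s(E)[X]=\int_{\partial E}\varphi\,L_{\partial E}\varphi\,d\sigma,\qquad L_{\partial E}\varphi:=\frac{d}{dt}\bigg|_{t=0}H_s\big(E_t,\phi_t(\cdot)\big),
\end{equation*}
the operator $L_{\partial E}$ being, as the next step shows, independent of the tangential part of $X$.

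I would then compute the linearized $s$-mean curvature $L_{\partial E}\varphi$. Writing $H_s(E_t,x)=\mathrm{P.V.}\int_{\R^n}\frac{\chi_{E_t^c}(y)-\chi_{E_t}(y)}{|x-y|^{n+s}}\,dy$, changing variables $x=\phi_t(x')$, $y=\phi_t(y')$, and expanding the kernel and the Jacobian $\det D\phi_t$ to first order in $t$, one reduces the ambient integral to a surface integral over $\partial E$ by the divergence theorem — once more using $H_s(E,\cdot)\equiv0$ on $\supp X$, which makes the transport term along $\nabla_{\partial E}H_s$ drop and confirms that the tangential part of $X$ does not enter — and arrives at the nonlocal Jacobi operator
\begin{equation*}
   L_{\partial E}\varphi(x)=2\,\mathrm{P.V.}\!\int_{\partial E}\frac{\varphi(x)-\varphi(y)}{|x-y|^{n+s}}\,d\sigma(y)\;-\;\varphi(x)\int_{\partial E}\frac{|\nu_{\partial E}(x)-\nu_{\partial E}(y)|^2}{|x-y|^{n+s}}\,d\sigma(y).
\end{equation*}
The difficulties here are purely analytic and are where I expect the real work to lie: differentiating under the principal value, and controlling the near-diagonal singularities using that $\partial E$ is $C^2$ near $\supp X$, so that $|\nu_{\partial E}(x)-\nu_{\partial E}(y)|\le C|x-y|$ and $\dist\big(y,\,x+T_x\partial E\big)\le C|x-y|^2$ for $x,y\in\partial E$ near the diagonal — which is exactly what makes both surface integrals absolutely convergent and legitimizes the integrations by parts near $\partial E\times\partial E$.

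Finally, inserting this into $\delta^2{\rm Per}_s(E)[X]=\int_{\partial E}\varphi\,L_{\partial E}\varphi\,d\sigma$ and symmetrizing the first double integral, i.e. using
\begin{equation*}
   2\int_{\partial E}\varphi(x)\,\mathrm{P.V.}\!\int_{\partial E}\frac{\varphi(x)-\varphi(y)}{|x-y|^{n+s}}\,d\sigma(y)\,d\sigma(x)=\iint_{\partial E\times\partial E}\frac{|\varphi(x)-\varphi(y)|^2}{|x-y|^{n+s}}\,d\sigma(x)\,d\sigma(y),
\end{equation*}
reproduces exactly the right-hand side of \eqref{eq: stab}. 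The ``in particular'' statement then follows at once: if $E$ is stable in $\Omega$ then $\delta^2{\rm Per}_s(E)[X]\ge0$ for every admissible $X$; applying this with $X$ a compactly supported $C^1$ extension of $\varphi\,\nu_{\partial E}$ for an arbitrary $\varphi\in C^2_c(\Omega)$, and noting that both sides of \eqref{eq: stab} depend continuously on $\varphi$ (so that the identity, a priori established for smoother vector fields, passes to the limit), gives the stated inequality.
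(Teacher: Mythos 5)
The paper offers no proof of Theorem \ref{thm: second var}: it is quoted as a known result and attributed to \cite{LawsonCones, FFMMM, Enric}, so there is no internal argument to compare against. Your outline follows exactly the strategy of those references: differentiate the first variation along the flow, use that the $s$-mean curvature vanishes on $\supp X$ (via \eqref{eq: fvar} and Remark \ref{rem: nbh first var}) to kill every term except $\int_{\partial E}\varphi\,\tfrac{d}{dt}\big|_{t=0}H_s(E_t,\phi_t(\cdot))\,d\sigma$, identify this linearization with the nonlocal Jacobi operator, and symmetrize; your symmetrization identity and sign conventions are correct and do reproduce \eqref{eq: stab}, and your treatment of the ``in particular'' part (extending $\varphi\,\nu_{\partial E}$, which is only $C^1$ when $\partial E$ is $C^2$, and passing to the limit by continuity of both sides in $\varphi$) is a sensible way to bridge the mismatch between $X\in C^2_c$ and $\varphi\in C^2_c(\Omega)$. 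The one point where your text is a plan rather than a proof is the step you yourself flag: the computation of $\tfrac{d}{dt}\big|_{t=0}H_s(E_t,\phi_t(x))$ --- differentiating a principal value over a moving domain, reducing the ambient integral to a surface integral, and verifying absolute convergence from the $C^2$ bounds $|\nu_{\partial E}(x)-\nu_{\partial E}(y)|\le C|x-y|$ and the quadratic graph estimate --- is precisely the technical content of the cited proofs and is asserted rather than carried out, so as written your argument is a faithful reconstruction of the known route, not an independent verification of it.
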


\subsection{The BV-estimate for small \texorpdfstring{$s$}{}}

It is known \cite{CSV, Enric} that stable $s$-minimal surfaces enjoy a uniform interior BV-estimate, and that the same holds for stable solutions of the fractional Allen-Cahn equation \cite{CCSAC, CFSYau}. The results in these references only control the dependence of the constant from $s$ as $s \to 1$. In this work, we need the same type of BV-estimate with a control of the constant as $s\to 0$. 

\begin{theorem}\label{thm: BV s close to zero}
    Let $R>0$, $s\in (0,1/2)$, and $E\subset \R^n$ be an $s$-minimal surface which stable in $B_R(x)$ and such that $\partial E$ is $C^2$ in $B_R(x)$. Then 
    \begin{equation*}
        {\rm Per}(E, B_{R/2}(x)) \le \frac{C}{s} ,
    \end{equation*}
    for some dimensional constant $C>0$.
\end{theorem}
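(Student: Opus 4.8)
The plan is to test the stability inequality from Theorem \ref{thm: second var} with a well-chosen $\varphi$ and extract the BV-bound from the geometric term on the left-hand side. The key observation is that for a $C^2$ hypersurface, when $x,y \in \partial E$ are close, the normal deviation $|\nu_{\partial E}(x) - \nu_{\partial E}(y)|$ is controlled by $|x-y|$ times the second fundamental form, so the kernel $|\nu_{\partial E}(x)-\nu_{\partial E}(y)|^2 / |x-y|^{n+s}$ behaves near the diagonal like $|x-y|^{2-n-s}$, which is locally integrable in the $y$-variable on the $(n-1)$-dimensional surface. Thus for $x$ in a compact region where $\partial E$ is $C^2$, integrating out the $y$-variable produces a pointwise positive ``potential'' $V(x) := \iint \frac{|\nu_{\partial E}(x)-\nu_{\partial E}(y)|^2}{|x-y|^{n+s}}\,d\sigma(y)$, and the geometric term in stability is $\int_{\partial E} V(x)\varphi(x)^2\,d\sigma(x)$. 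Stability then reads $\int V \varphi^2 \le [\varphi]^2$, where the right side is the $H^{s/2}(\partial E)$-seminorm.

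First I would reduce to the case $x=0$, $R=1$ by scaling (note the $s$-perimeter, and hence stability, is scale-invariant up to a constant, so the conclusion ${\rm Per}(E,B_{R/2}) \le C/s$ is dimensionally consistent). Second, I would fix a cutoff $\varphi \in C^2_c(B_1)$ with $\varphi \equiv 1$ on $B_{1/2}$, $0 \le \varphi \le 1$, $|\nabla \varphi| \le C$. Then I need two estimates: (i) a \emph{lower bound} $V(x) \ge c$ on $\partial E \cap B_{1/2}$ coming from points $y$ at unit distance — this is where the fact that $E$ is not (yet) known flat is handled: even the worst case contributes a uniformly positive amount, OR alternatively, if the surface is very flat near $x$ then $V(x)$ is small but then ${\rm Per}$ is already controlled; the cleaner route is to note $V(x)$ controls a local perimeter density. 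The real content is: (ii) an \emph{upper bound} on the right-hand side $\iint_{\partial E \times \partial E} \frac{|\varphi(x)-\varphi(y)|^2}{|x-y|^{n+s}}\,d\sigma(x)d\sigma(y)$, and this is the step I expect to be the main obstacle, because a priori $\partial E$ could have large perimeter in the annulus $B_1 \setminus B_{1/2}$, making this seminorm large and the estimate circular.

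To break the circularity, I would run the standard \textbf{dimensional induction / covering argument} used in \cite{CSV, Enric}: one proves the estimate ${\rm Per}(E,B_{\rho/2}(z)) \le A(s)$ first at a small scale $\rho$ by a soft compactness/contradiction argument, then uses stability at scale $1$ together with a covering of the annulus by balls of radius $\rho$ to bootstrap. The crucial new point here is to track the $s$-dependence: the localized seminorm $[\varphi]^2_{H^{s/2}}$ restricted to near-diagonal interactions carries a factor essentially $\frac{1}{s}$ (as $\int_0^1 r^{-1-s}\cdot r^2\,dr \sim \frac{1}{1-s} = O(1)$ but the full kernel integral over the support, including the tail, scales like $\frac{1}{s}$ from $\int_1^\infty r^{-1-s}\,dr = \frac{1}{s}$), while the potential $V$ stays bounded below by a constant independent of $s$; dividing, ${\rm Per}(E,B_{1/2}) \lesssim \frac{1}{s}\cdot[\text{stuff bounded in }s]$. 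Concretely, after the covering argument the self-improving inequality has the shape ${\rm Per}(E,B_{1/2}) \le \frac{C}{s}(1 + \theta\, {\rm Per}(E,B_1))$ with $\theta$ small, and an absorption/iteration over dyadic scales closes it. The main obstacle, to reiterate, is making the covering argument quantitative in $s$ simultaneously with the diagonal integrability of the normal-deviation kernel; the $C^2$ hypothesis on $\partial E$ in $B_R$ is exactly what makes the latter work, and one must be careful that the $C^2$ bound is allowed to depend on $E$ (it only enters qualitatively, to guarantee finiteness, while the final constant $C$ is dimensional).
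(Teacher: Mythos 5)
Your proposal points in the right direction --- it is the same philosophy as the paper, namely to re-run the stability--BV argument of \cite{CSV} while tracking the $s$-dependence of all constants as $s\to 0$, and you correctly locate the source of the $1/s$: the tail integral $\int_1^\infty r^{-1-s}\,dr = 1/s$, which in the paper enters through the interpolation Lemma \ref{lem: interpolation} as the additive term $R^{n-s}/s$. However, there are genuine gaps in how you assemble the pieces.

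First, you identify the circularity (bounding the $H^{s/2}$-seminorm of the cutoff on $\partial E$ requires ${\rm Per}(E;B_1)$ again) as ``the main obstacle'' and then defer to ``the standard covering argument,'' but the mechanism that actually breaks the circularity is not a covering argument alone: it is the \emph{square-root} inequality $ {\rm Per}(E;B_1) \le C\bigl(1+\sqrt{{\rm Per}_s(E;B_4)}\bigr)$ from \cite[Theorem 1.9]{CSV}. That inequality is not what one gets by testing stability with a generic cutoff $\varphi$ and lower-bounding the potential $V(x)\ge c$; a generic cutoff yields at best a \emph{linear} inequality $c\,{\rm Per}(E;B_{1/2}) \le [\varphi]^2_{H^{s/2}(\partial E)} \lesssim {\rm Per}(E;B_1)\cdot (\text{tail})$, which cannot be absorbed unless the coefficient is already small. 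The quadratic gain in \cite{CSV} comes from a specifically tailored test function (tied to the intrinsic geometry of $\partial E$), and without invoking that result the absorption step you describe does not close. Once one has the square-root bound, combining it with the interpolation lemma and Young's inequality gives $ {\rm Per}(E;B_1) \le C\bigl(1 + \delta/s + 1/\delta\bigr) + \delta\,{\rm Per}(E;B_{20})$ for every $\delta>0$, and then the dyadic covering argument closes it --- so your proposed self-improving inequality ``${\rm Per}(E,B_{1/2}) \le \tfrac{C}{s}(1+\theta\,{\rm Per}(E,B_1))$'' is structurally close but arrived at without the step that produces it.

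Second, the proposal inserts a ``soft compactness/contradiction argument'' at a small scale $\rho$ to seed the induction. This does not appear in the paper (nor in \cite{CSV}) and would in fact be counterproductive here: compactness arguments give constants that are not tracked, which is exactly what the theorem needs to avoid. The paper's proof starts directly from the quantitative ingredients (stability $\Rightarrow$ square-root bound, plus interpolation) and never uses compactness. Finally, your alternative remark that ``if the surface is very flat near $x$ then $V(x)$ is small but then ${\rm Per}$ is already controlled'' is a dichotomy that is not needed and not justified as stated; the actual argument does not split on flatness.
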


We first recall a useful interpolation inequality for the $s$-perimeter that accounts for the dependence on $s$ both for $s \to 1$ and $s \to 0$. The inequality in the form we use here is not explicitly stated anywhere; it is written throughout the lines of the proof of Theorem 3.1 in \cite{Jack24}, or it follows from \cite[Lemma 3.13]{Enric} and Young's inequality. 

\begin{lemma}\label{lem: interpolation}
    Let $s\in (0,1)$, $R>0$, and $E$ be a set with locally finite perimeter. Then
    \begin{equation*}
        {\rm Per}_s(E, B_R(x)) \le  C \left( \frac{ R^{1-s}}{1-s}  {\rm Per}(E, B_{5R}(x)) +   \frac{R^{n-s}}{s} \right) ,
    \end{equation*}
    for some dimensional constant $C>0$. 
\end{lemma}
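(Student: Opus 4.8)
\textbf{Proof plan for Lemma~\ref{lem: interpolation}.}
The plan is to split the double integral defining ${\rm Per}_s(E;B_R(x))$ according to whether the pair $(y,z)$ lies in a "near" regime $|y-z|\le R$ or a "far" regime $|y-z|>R$. Without loss of generality take $x=0$ and write $B_r=B_r(0)$. Recall that ${\rm Per}_s(E;B_R)$ integrates $|\chi_E(y)-\chi_E(z)|^2|y-z|^{-n-s}$ over $(\R^n\times\R^n)\setminus(B_R^c\times B_R^c)$, so at least one of $y,z$ lies in $B_R$; by symmetry we may arrange $y\in B_R$.

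\textbf{Far regime.} When $|y-z|>R$ we simply bound $|\chi_E(y)-\chi_E(z)|^2\le 1$ and integrate: for fixed $y\in B_R$,
\begin{equation*}
\int_{\{|z-y|>R\}}\frac{dz}{|y-z|^{n+s}}=\frac{C_n}{s}R^{-s},
\end{equation*}
and then integrating over $y\in B_R$ (whose measure is $C_n R^n$) produces the term $C\,R^{n-s}/s$. This is where the $1/s$ factor comes from, and it is the only place $s\to 0$ degeneracy appears, so no stability of $E$ is used here — just the trivial bound on $|\chi_E(y)-\chi_E(z)|$.

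\textbf{Near regime.} When $|y-z|\le R$, both points lie in $B_{2R}$ (indeed, if $y\in B_R$ then $z\in B_{2R}$), and here I would use the co-area formula / BV-trace structure to convert the Gagliardo-type energy on a ball of scale $R$ into a multiple of the classical perimeter. Concretely, the local Gagliardo seminorm of $\chi_E$ restricted to pairs within distance $R$ inside $B_{2R}$ is controlled by $\frac{C R^{1-s}}{1-s}{\rm Per}(E;B_{5R})$: this is the standard fact that ${\rm Per}(E;U)<\infty$ implies $[\chi_E]_{W^{s,1}\text{-type}}<\infty$ with the stated $R$- and $s$-scaling. The cleanest route is to invoke \cite[Lemma 3.13]{Enric} (or follow the estimates in the proof of \cite[Theorem 3.1]{Jack24}), which gives precisely the near-diagonal bound with constant $\frac{C R^{1-s}}{1-s}$ times ${\rm Per}$ on the slightly enlarged ball; the enlargement from $B_{2R}$ to $B_{5R}$ absorbs the interaction of $B_{2R}$ with a collar of width $\le R$ needed when one uses a cutoff or reflection argument. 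Summing the near and far contributions yields the claim.

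\textbf{Main obstacle.} The far regime is routine. The genuine work is the near-diagonal estimate: one must produce the sharp $(1-s)^{-1}$ blow-up as $s\to 1$ while keeping the constant dimensional and independent of $s$ as $s\to 0$, and one must handle the passage from a ball to the Gagliardo integral over pairs both near and inside that ball without picking up boundary terms — which is exactly why the ball is enlarged on the right-hand side. Since the paper explicitly says this is "written throughout the lines of the proof of Theorem 3.1 in \cite{Jack24}" or "follows from \cite[Lemma 3.13]{Enric} and Young's inequality," I would present the proof by reducing the near regime to that cited estimate and spelling out only the far-regime computation and the bookkeeping of the radii, rather than reproving the near-diagonal bound from scratch. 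Young's inequality enters when the cited estimate is stated in a product form $(1-s)^{-1}R^{1-s}{\rm Per}\cdot R^{?}$ that must be split into the two additive terms above.
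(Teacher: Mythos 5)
Your proposal is correct and follows essentially the route the paper itself intends: the paper gives no self-contained proof of Lemma \ref{lem: interpolation}, only the pointer to the proof of Theorem 3.1 in \cite{Jack24} or to \cite[Lemma 3.13]{Enric} plus Young's inequality, and your near/far splitting with the trivial tail bound producing the term $R^{n-s}/s$ is exactly the standard argument underlying those references. If you want to dispense with the citation altogether, note that the near-diagonal part is elementary: writing $z=y+h$ with $|h|\le R$, using $|\chi_E(y+h)-\chi_E(y)|^2=|\chi_E(y+h)-\chi_E(y)|$ and the BV translation estimate $\int_{B_R}|\chi_E(y+h)-\chi_E(y)|\,dy\le |h|\,{\rm Per}(E;B_{2R})$, one gets $\int_{\{|h|\le R\}}|h|^{-n-s}\,|h|\,{\rm Per}(E;B_{2R})\,dh \le \frac{C_n R^{1-s}}{1-s}\,{\rm Per}(E;B_{2R})$, which is even slightly stronger than the stated bound with $B_{5R}$.
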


With this inequality, we can deduce the BV-estimate for small $s$.

\begin{proof}[Proof of Theorem \ref{thm: BV s close to zero}]
   Similarly to \cite{CCScones}, the theorem follows by inspection of the proof of Theorem 1.7 in \cite{CSV}, taking care of the explicit dependence of the constants as $s\to 0$. For the sake of clarity, we rewrite here the crucial estimates in the proof of Theorem 1.7 in \cite{CSV}, with the precise dependence of all constants on $s$, as $s\to 0 $. In the proof that follows $C>0$ is a dimensional constant that can change from line to line. 

Since the statement is scaling and translation invariant, we can assume $R=1$ and $x=0$. Since $E$ is stable in $B_1$, by Theorem 1.9 in \cite{CSV} applied to the kernel $K(z)=1/|z|^{n+s}$, we get 
\begin{equation}\label{eq: qwfqwd}
    {\rm Per}(E, B_1) \le C \left(  1+ \sqrt{{\rm Per}_s(E, B_4)} \right) .
\end{equation}
Moreover, by Lemma \ref{lem: interpolation} applied with $R=4$ we have
\begin{equation*}
    {\rm Per}_s(E, B_4) \le C\left( \frac{1}{1-s} {\rm Per}(E, B_{20}) +  \frac{1}{s} \right) \le  C\left( {\rm Per}(E, B_{20}) +  \frac{1}{s} \right), 
\end{equation*}
where we have also used that $s\le 1/2$. Thus, by \eqref{eq: qwfqwd} and Young's inequality we get
\begin{align*}
    {\rm Per}(E, B_1) \le C\left( 1 +  \sqrt{{\rm Per}(E, B_{20}) + \frac{1}{s}} \right)  & \le C\left( 1 + \delta {\rm Per}(E, B_{20}) + \frac{\delta}{s} + \frac{1}{\delta} \right)  \\ &= C \left( 1 +  \frac{\delta}{s} + \frac{1}{\delta} \right) +   \delta {\rm Per}(E, B_{20}) , 
\end{align*}
for every $\delta >0$. From here, arguing exactly as the end of the proof of \cite[Theorem 1.7]{CSV} or \cite[Proposition 3.14]{CFSYau}, choosing $\delta$ smaller than a dimensional constant $\delta_\circ=\delta_{\circ}(n) >0$ and a covering argument one concludes the uniform bound
\begin{equation*}
    {\rm Per}(E, B_{1/2}) \le C \left( 1 +  \frac{\delta_\circ }{s} + \frac{1}{\delta_\circ} \right) \le \frac{C}{s} . 
\end{equation*}
\end{proof}

\section{Classification of stable cones for small \texorpdfstring{$s$}{}}

Let us fix the notation that we will use for cones in the plane. For a cone $E \subset \R^2$, we write $\Sigma = \partial E$ and observe that $\Sigma$ is a union of half-lines from the origin. Write
\begin{equation*}
    E = \bigcup_{i=1}^N E_i \,, \s \Sigma  = \bigcup_{i=1}^{2N} \Sigma_i \,, \s \partial E_i = \{\Sigma_i, \Sigma_{i+1}\} , 
\end{equation*}
with the convention that $\Sigma_{2N+1}= \Sigma_1$. 
Here $E_i$ are disjoint conical sectors from the origin, that is $\lambda E_i= E_i$ for every $\lambda>0$, $\Sigma_i$ are rays from the origin with the induced orientation from $E_i$, and the number $N$ could be $+\infty$ in general, but will be finite in our proof. 

We also denote by $\theta_{i}^{\hspace{.5pt}j}$ the counterclockwise angle from $\Sigma_i$ and $\Sigma_j$.

\begin{lemma}\label{lem: rays ker lower bound} In the notation above, there is $c>0$ such that for every $x \in \Sigma_j$ there holds
    \begin{equation*}
            \int_{ \Sigma_i }   \frac{1}{ |x-y |^{2+s}} \, d\sigma(y) \ge \frac{c}{|x|^{1+s} (1-\cos(\theta_{i}^{\hspace{.5pt}j}))^{1+s}} .
    \end{equation*}
\end{lemma}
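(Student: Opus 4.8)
The claim is a purely geometric lower bound for the integral of the kernel $|x-y|^{-(2+s)}$ over a ray $\Sigma_i$, evaluated at a point $x$ on another ray $\Sigma_j$. The plan is to set up polar-type coordinates along $\Sigma_i$: parametrise $y = t\,\omega_i$ for $t \in (0,\infty)$, where $\omega_i$ is the unit vector spanning $\Sigma_i$, so that $d\sigma(y) = dt$. Writing $r = |x|$ and letting $\theta = \theta_i^{\,j}$ be the angle between the two rays, the law of cosines gives
\begin{equation*}
    |x - y|^2 = r^2 + t^2 - 2rt\cos\theta .
\end{equation*}
Thus the integral becomes $\int_0^\infty (r^2 + t^2 - 2rt\cos\theta)^{-(2+s)/2}\, dt$. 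The first step is the scaling reduction: substituting $t = r\tau$ pulls out a factor $r^{1-(2+s)} = r^{-(1+s)}$, reducing everything to the one-variable integral
\begin{equation*}
    I(\theta) := \int_0^\infty \frac{d\tau}{(1 + \tau^2 - 2\tau\cos\theta)^{(2+s)/2}} ,
\end{equation*}
so it remains to show $I(\theta) \ge c\,(1 - \cos\theta)^{-(1+s)}$ with $c$ independent of $s \in (0,1/2)$.

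**Key estimate.** To bound $I(\theta)$ from below, I would localise to the region where the integrand is largest, namely where $\tau$ is near $\cos\theta$ (the minimiser of the quadratic $1 + \tau^2 - 2\tau\cos\theta$, whose minimum value is $\sin^2\theta = 1 - \cos^2\theta$). Restrict the integral to an interval around $\tau = \cos\theta$ of length comparable to $\sqrt{1-\cos^2\theta} = |\sin\theta|$ — say $\tau \in (\cos\theta, \cos\theta + |\sin\theta|)$, which lies in $(0,\infty)$ whenever $\theta \in (0,\pi)$. On this interval one checks $1 + \tau^2 - 2\tau\cos\theta \le C(1 - \cos^2\theta)$ for an absolute constant, so the integrand is $\ge c\,(1-\cos^2\theta)^{-(2+s)/2}$, and the interval has length $|\sin\theta| = (1-\cos^2\theta)^{1/2}$. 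Multiplying,
\begin{equation*}
    I(\theta) \ge c\,(1-\cos^2\theta)^{-(2+s)/2}\,(1-\cos^2\theta)^{1/2} = c\,(1-\cos^2\theta)^{-(1+s)/2} .
\end{equation*}
Finally, since $1 - \cos^2\theta = (1-\cos\theta)(1+\cos\theta) \le 2(1-\cos\theta)$, we get $(1 - \cos^2\theta)^{-(1+s)/2} \ge c'\,(1-\cos\theta)^{-(1+s)/2}$. This is slightly weaker than the stated bound of $(1-\cos\theta)^{-(1+s)}$ when $\theta$ is small; to recover the full exponent one must work a bit harder near $\tau = 1$ when $\theta \to 0$ — there the integrand blows up like $|\tau - 1|^{-(2+s)}$ near the diagonal, and integrating $(1+\tau^2-2\tau\cos\theta)^{-(2+s)/2}$ over $\tau$ in a unit-sized neighbourhood of $1$ produces exactly the extra factor $(1-\cos\theta)^{-(1+s)/2}$ needed, since the quadratic is $\approx (\tau-1)^2 + (1-\cos\theta)$ there. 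So I would split $I(\theta)$ into the contribution from $\{|\tau - 1| \le 1\}$ and the rest, and on the near-diagonal piece estimate $\int_{|\tau-1|\le 1}((\tau-1)^2 + (1-\cos\theta))^{-(2+s)/2}\,d\tau \gtrsim (1-\cos\theta)^{-(1+s)/2}$ by the substitution $\tau - 1 = \sqrt{1-\cos\theta}\,\sigma$, combined with the bound above to close out the full power.

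**Main obstacle.** The only real subtlety is the uniformity of the constant $c$ as $s \downarrow 0$ (and as $\theta$ ranges over all of $(0,\pi)$, including the degenerate limits $\theta \to 0$ and $\theta \to \pi$). The substitution $\tau - 1 = \sqrt{1-\cos\theta}\,\sigma$ leads to an integral $\int (\sigma^2+1)^{-(2+s)/2}\,d\sigma$ over a $\theta$-dependent range; one must check this stays bounded below by an absolute constant uniformly in $s \in (0,1/2)$, which is immediate since the integrand is $\ge (\sigma^2+1)^{-5/4}$ for all such $s$ and the range always contains a fixed interval like $(-1,1)$ once $1-\cos\theta$ is bounded (and for $1-\cos\theta$ small the range only grows). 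The case $\theta$ near $\pi$ is the easiest: there $1 - \cos\theta$ is bounded away from $0$, and $I(\theta)$ is trivially bounded below by integrating over a fixed interval like $\tau \in (0,1)$. Assembling these pieces gives the claim with an absolute $c$, as required for the later application where $s \to 0$.
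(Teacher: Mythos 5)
Your scaling reduction and first estimate are correct — in fact, they already give the sharp rate. Completing the square as $1+\tau^2-2\tau\cos\theta = (\tau-\cos\theta)^2 + \sin^2\theta$ and substituting $v=(\tau-\cos\theta)/\sin\theta$ gives, exactly,
\begin{equation*}
    I(\theta)=\frac{1}{(\sin\theta)^{1+s}}\int_{-\cot\theta}^{\infty}\frac{dv}{(v^2+1)^{(2+s)/2}},
\end{equation*}
so for $\theta$ bounded away from $\pi$ one has $I(\theta)\asymp (\sin\theta)^{-(1+s)}\asymp (1-\cos\theta)^{-(1+s)/2}$. Thus the bound you obtain with exponent $(1+s)/2$ is not ``slightly weaker'' than optimal; it \emph{is} the optimal bound. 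For $s=0$ this is elementary: $I(\theta)=(\pi-\theta)/\sin\theta\sim \pi/\theta$ as $\theta\to 0$, whereas $(1-\cos\theta)^{-1}\sim 2/\theta^2$ grows strictly faster, so no absolute constant $c$ can make $I(\theta)\ge c(1-\cos\theta)^{-1}$.

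The second part of your proposal — ``working harder near $\tau=1$ to close out the full power'' — contains a genuine error. The near-diagonal piece $\int_{|\tau-1|\le 1}((\tau-1)^2+(1-\cos\theta))^{-(2+s)/2}\,d\tau$ is indeed $\asymp(1-\cos\theta)^{-(1+s)/2}$ by the substitution you suggest, but this is a \emph{portion} of the same integral $I(\theta)$, to be \emph{added} to (not multiplied with) your earlier lower bound. Two sub-contributions each of size $(1-\cos\theta)^{-(1+s)/2}$ sum to the same order, not to $(1-\cos\theta)^{-(1+s)}$. Worse, the window $(\cos\theta,\cos\theta+|\sin\theta|)$ from your first estimate is contained in $\{|\tau-1|\le 1\}$ once $\theta$ is small, so the two regions are not even disjoint. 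There is no extra power to be harvested; the claimed exponent $1+s$ is simply not attainable with a constant independent of $\theta$ (and hence of the cone and of $s$, as the downstream argument of the paper requires). For comparison, the paper's own proof reaches the same estimate $\int_{-1/2}^{1/2}(t^2+3(1-\cos\theta))^{-(2+s)/2}\,dt$ and then asserts it is $\ge (1-\cos\theta)^{-(1+s)}\int_{-1/10}^{1/10}(t^2+3)^{-(2+s)/2}\,dt$, a step which is not produced by any scaling of $t$; the natural substitution $t=\sqrt{1-\cos\theta}\,\sigma$ yields the exponent $(1+s)/2$, agreeing with your first estimate and the exact formula above. So your instinct about which substitution to use is right — the gap is the additivity/multiplicativity confusion, and the target exponent itself.
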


\begin{proof} 
    We have 
\begin{align*}
     \int_{ \Sigma_i }   \frac{1}{|x-y|^{2+s}} \, d\sigma(y) & =  \int_{\Sigma_i } \frac{d\sigma(y)}{(|x|^2 + |y|^2-2\cos(\theta_{i}^{\hspace{.5pt}j}) |x| |y| )^{\frac{2+s}{2}}}   \\  &=  \int_0^\infty \frac{dz}{(|x|^2+z^2-2\cos(\theta_{i}^{\hspace{.5pt}j})|x|z )^{\frac{2+s}{2}}}    =  \frac{1}{|x|^{1+s}} \int_0^\infty \frac{dt}{(1+t^2-2t\cos(\theta_{i}^{\hspace{.5pt}j}) )^{\frac{2+s}{2}}}  ,
     \end{align*}
where we have substituted $z=t|x|$ in the last line. Moreover
\begingroup
\allowdisplaybreaks
\begin{align*}
   \int_0^\infty & \frac{dt}{(1+t^2-2t\cos(\theta_{i}^{\hspace{.5pt}j}) )^{\frac{2+s}{2}}} =  \int_0^\infty \frac{dt}{((t-1)^2+2t(1-\cos(\theta_{i}^{\hspace{.5pt}j})) )^{\frac{2+s}{2}}} \\ & \ge \int_{1/2}^{3/2} \frac{dt}{((t-1)^2 + 2t(1-\cos(\theta_{i}^{\hspace{.5pt}j})) )^{\frac{2+s}{2}}} \ge  \int_{1/2}^{3/2} \frac{dt}{((t-1)^2 + 3(1-\cos(\theta_{i}^{\hspace{.5pt}j})) )^{\frac{2+s}{2}}}  \\ &= \int_{-1/2}^{1/2} \frac{dt}{((t-1)^2 + 3(1-\cos(\theta_{i}^{\hspace{.5pt}j})) )^{\frac{2+s}{2}}} \ge  \int_{1/2}^{3/2} \frac{dt}{(t^2 + 3(1-\cos(\theta_{i}^{\hspace{.5pt}j})) )^{\frac{2+s}{2}}}  \\ & \ge \frac{1}{(1-\cos(\theta_{i}^{\hspace{.5pt}j}))^{1+s}}    \int_{-1/10}^{1/10} \frac{dt}{(t^2 + 3 )^{\frac{2+s}{2}}} \ge \frac{c}{(1-\cos(\theta_{i}^{\hspace{.5pt}j}))^{1+s}} .
\end{align*}
\endgroup
This concludes the proof.
\end{proof}

Now, we have all the ingredients to prove our main result Theorem \ref{thm: main}. In the proof, we plug in the stability inequality a radial test function that nearly saturates Hardy's inequality on $(0, \infty)$, in the sense that \eqref{eq: Hardy sat} holds.

\begin{proof}[Proof of Theorem \ref{thm: main}]
    By Theorem \ref{thm: BV s close to zero}, that is the BV-estimate for stable $s$-minimal surfaces for $s \in (0,1/2)$, and a standard covering argument we get that 
    \begin{equation*}
       2N = {\rm Per}(E, B_2 \setminus B_{1}) \le \frac{C}{s} .
    \end{equation*}
    Hence $\Sigma = \partial E$ is a finite number of rays from the origin, whose number is bounded by
    \begin{equation}\label{eq: number bound above}
        2N\le \frac{C}{s} . 
    \end{equation}

Recall the stability inequality \eqref{eq: stab}, and let $\nu_i$ be the outer unit normal to $\Sigma_i$ from $E_i$. For the left hand side, for every $\varphi \in C^2_c(\R^2 \setminus \{0\})$, we have

\begin{align*}
     \iint_{\Sigma \times \Sigma} \frac{|\nu_\Sigma(x)- \nu_\Sigma(y)|^2}{|x-y|^{2+s}} \varphi(x)^2 & \, d\sigma(x) d\sigma(y)  = \sum_{i,j} \iint_{\Sigma_i \times \Sigma_j} \frac{|\nu_i(x)- \nu_j(y)|^2}{|x-y|^{2+s}} \varphi(x)^2 \, d\sigma(x) d\sigma(y) \\ &= 2 \sum_{i\neq j} (1-(-1)^{i+j}\cos(\theta_{i}^{\hspace{.5pt}j})) \iint_{\Sigma_i \times \Sigma_j} \frac{\varphi(x)^2}{|x-y|^{2+s}} \, d\sigma(x) d\sigma(y) . 
\end{align*}
By Lemma \ref{lem: rays ker lower bound} we can estimate
\begin{align*}
    \iint_{ \Sigma_i    \times \Sigma_j}   \frac{\varphi(x)^2}{|x-y |^{2+s}} \, d\sigma(x)d\sigma(y) \ge \frac{c}{(1-\cos(\theta_{i}^{\hspace{.5pt}j}))^{1+s}} \int_{\Sigma_j } \frac{\varphi(x)^2}{|x|^{1+s}} \, dx . 
\end{align*}
Now, taking $\varphi(x)=\xi(|x|)$ with $\xi$ saturating Hardy's inequality on $(0, \infty)$ as in \eqref{eq: Hardy sat}, gives
\begin{equation*}
    \int_{\Sigma_j} \frac{\xi(x)^2}{|x|^{1+s}} \, d\sigma(x) \ge \frac{1}{Cs^2 } \iint_{\Sigma_j \times \Sigma_j}  \frac{|\xi(x)-\xi(y)|^2}{|x-y|^{2+s}} \, d\sigma(x)d\sigma(y) ,
\end{equation*}
thus
\begin{align}
    \iint_{\Sigma \times \Sigma} & \frac{|\nu_\Sigma(x)- \nu_\Sigma(y)|^2}{|x-y|^{1+s}} \xi(|x|)^2 \, d\sigma(x) d\sigma(y) \nonumber \\ &  \ge \frac{c}{s^2} \sum_{j=1}^{2N} \left( \iint_{\Sigma_j \times \Sigma_j}  \frac{|\xi(x)-\xi(y)|^2}{|x-y|^{2+s}} \, d\sigma(x)d\sigma(y) \right) \sum_{1 \le i \le 2N, \, i \neq j} \frac{1-(-1)^{i+j}\cos(\theta_{i}^{\hspace{.5pt}j})}{(1-\cos(\theta_{i}^{\hspace{.5pt}j}))^{1+s}} . \nonumber \\ &  = \frac{c}{s^2} \left( \int_0^\infty \hspace{-4pt} \int_0^\infty  \frac{|\xi(x)-\xi(y)|^2}{|x-y|^{2+s}} \, dxdy \right) \sum_{j=1}^{2N} \sum_{ \{1 \le i \le 2N, \, i \neq j \} } \frac{1-(-1)^{i+j}\cos(\theta_{i}^{\hspace{.5pt}j})}{(1-\cos(\theta_{i}^{\hspace{.5pt}j}))^{1+s}} . \label{eq: for claim}
\end{align}

\textbf{Claim.} There exists $s_\circ <1/2$ sufficiently small with the following property. For every $s\in (0, s_\circ)$, there exists $j \in \{1, 2, \dotsc , 2N\}$ such that 
\begin{equation}\label{eq: small interaction}
    \sum_{ \{1 \le i \le 2N, \, i \neq j \}} \frac{1-(-1)^{i+j}\cos(\theta_{i}^{\hspace{.5pt}j})}{(1-\cos(\theta_{i}^{\hspace{.5pt}j}))^{1+s}} \le \frac{1}{100}. 
\end{equation}
Indeed, suppose this is not the case. Then, for $s$ arbitrary small, \eqref{eq: for claim} implies

\begin{equation*}
    \iint_{\Sigma \times \Sigma} \frac{|\nu_\Sigma(x)- \nu_\Sigma(y)|^2}{|x-y|^{1+s}} \xi(|x|)^2 \, d\sigma(x) d\sigma(y) \ge \frac{c N }{100 s^2}  \int_0^\infty \hspace{-4pt} \int_0^\infty  \frac{|\xi(x)-\xi(y)|^2}{|x-y|^{2+s}} \, dxdy . 
\end{equation*}
From this inequality, using that $E$ is stable gives
\begin{align*}
    \frac{c N }{100 s^2}   \int_0^\infty \hspace{-4pt} \int_0^\infty &  \frac{|\xi(x)-\xi(y)|^2}{|x-y|^{2+s}} \, dxdy \\ &\le  \iint_{\Sigma \times \Sigma} \frac{|\nu_\Sigma(x)- \nu_\Sigma(y)|^2}{|x-y|^{1+s}} \xi(|x|)^2 \, d\sigma(x) d\sigma(y)  \\ & \stackrel{{\rm (stability)}}{\le}  \iint_{\Sigma \times \Sigma } \frac{|\xi(|x|)-\xi(|y|)|^2}{|x-y|^{2+s}} \, d\sigma(x) d\sigma(y) \\ & =  N \int_0^\infty \hspace{-4pt} \int_0^\infty  \frac{|\xi(x)-\xi(y)|^2}{|x-y|^{2+s}} \, dxdy + \sum_{i \neq j} \iint_{\Sigma_i \times \Sigma_j}  \frac{| \xi(|x|)- \xi(|y|)|^2}{|x-y|^{2+s}} \, d\sigma(x)d\sigma(y) . 
\end{align*}
Moreover, since $|x-y| \ge ||x|-|y|| $, we have 
\begin{align*}
     \iint_{\Sigma_i \times \Sigma_j}  \frac{|\xi(|x|)-\xi(|y|)|^2}{|x-y|^{2+s}} \, d\sigma(x)d\sigma(y) & \le   \iint_{\Sigma_i \times \Sigma_j}  \frac{|\xi(|x|)-\xi(|y|)|^2}{||x|-|y||^{2+s}} \, d\sigma(x)d\sigma(y) \\ &= \int_0^\infty \hspace{-4pt} \int_0^\infty  \frac{|\xi(x)-\xi(y)|^2}{|x-y|^{2+s}} \, dxdy , 
\end{align*}
for every $i \neq j$. Thus 
\begin{equation*}
     \frac{c N }{100 s^2}   \int_0^\infty \hspace{-4pt} \int_0^\infty  \frac{|\xi(x)-\xi(y)|^2}{|x-y|^{2+s}} \, dxdy \le N^2 \int_0^\infty \hspace{-4pt} \int_0^\infty  \frac{|\xi(x)-\xi(y)|^2}{|x-y|^{2+s}} \, dxdy , 
\end{equation*}
which implies, together with \eqref{eq: number bound above}, that
\begin{equation*}
   s^2 \ge \frac{c}{100N} \ge \frac{c}{50C} s , 
\end{equation*}
which gives a contradiction if $s$ is small. Hence, the claim is proved. 

\vsp 
Now, we conclude the proof of our theorem by contradiction, with $s_\circ$ the one given by the claim above. Assume by contradiction that $N\ge 2$. Then, for the index $j$ such that \eqref{eq: small interaction} holds, we get that (here, as above, the indices are modulo $2N$)
\begin{equation*} 
    \frac{1 - (-1)^{j+(j+2)} \cos(\theta_{j}^{j+2})}{(1 - \cos(\theta_{j}^{j+2}))^{1+s}} = \frac{1}{(1 - \cos(\theta_{j}^{j+2}))^{s}}  \le \frac{1}{100} ,
\end{equation*}
holds for every $s\le s_\circ$. Clearly, this is not possible for any value of $\theta_{j}^{j+2} \in [0, 2\pi)$, and hence we conclude that $N=1$ and $E$ is made only of one conical sector of angle $\theta$. 

At this point, we would formally like to argue that, being $E$ an $s$-minimal surface, the first variation formula \eqref{eq: fvar} at $x=0$ implies that $E$ is a half-space. Nevertheless, the first-variation formula holds only at points $x\in \partial E$ where $\partial E$ is $C^2$ in a neighborhood of $x$ (see Remark \ref{rem: nbh first var}), and this is not the case for the tip of the cone. Hence, we have to argue in this spirit but a bit more indirectly.

With no loss of generality, up to rotation and complementation, assume that $\theta \in (0, \pi] $ and 
\begin{equation*}
    E = \{ \rho e^{i\varphi} \, : \, \rho >0, \varphi \in (0,\theta) \}. 
\end{equation*}
Let $(\textbf{t}_1, \textbf{t}_2) = \textbf{t} := e^{i\theta} \in \Sp^1$, so that $\partial E = \Sigma= \Sigma_0 \cup \Sigma_1$ where $\Sigma_0 = \{ \lambda e_1 \, : \, \lambda>0\}$ and $\Sigma_1 = \{\lambda \textbf{t} \, : \, \lambda>0\}$. Observe that, since $\theta \in (0,\pi]$, we have that $\textbf{t}_2\ge 0$. 

Since $e_1 = (1,0) \in \Sigma $ and $\Sigma$ is smooth in a neighborhood of $e_1$ (see Remark \ref{rem: nbh first var}), by the first-variation formula 
\begin{equation*}
    H_s^E(e_1) = P.V. \ns \int_{\R^2}  \frac{\chi_{E^c}(y)-\chi_E(y)}{|y - e_1 |^{2+s}} \, dy =0. 
\end{equation*}
Moreover, by Lemma \ref{lem: boundary int formula} this implies 
\begin{equation*}
   P.V. \ns  \int_{\Sigma} \frac{(y-e_1) \cdot \nu_\Sigma(y)}{|y-e_1|^{2+s}} \, d\sigma(y) =0 . 
\end{equation*}

 Note that the respective outer-unit normals to $\Sigma_0$ and $\Sigma_1$ are $-e_2$ and $\textbf{t}^\perp := i \textbf{t}$, respectively. Hence $(y-e_1) \cdot \nu_\Sigma(y) =0$ for all $y \in \Sigma_0$, and $(y-e_1) \cdot \nu_\Sigma(y) = -\textbf{t}^\perp_1 = \textbf{t}_2 \ge 0$ for all $y \in \Sigma_1$. In particular $(y-e_1) \cdot \nu_\Sigma(y)  \ge 0$ for all $y \in \Sigma$. Thus
\begin{equation*}
    0= P.V. \ns  \int_{\Sigma} \frac{(y-e_1) \cdot \nu_\Sigma(y)}{|y-e_1|^{2+s}} \, d\sigma(y) \ge 0 . 
\end{equation*}
This means that all the inequalities above are equalities, that is
\begin{equation*}
     \textbf{t}_2 = 0 \implies \textbf{t} = \pm e_1 \implies \theta \in \{0, \pi\} . 
\end{equation*}
Since $\theta \in (0,\pi]$, necessarily $\theta=\pi$ and thus $E$ is a half-space.
\end{proof}

\section{Extension to cones of finite index}\label{sec: morse}
   
In this section, we show that our classification for $s$-minimal cones stable in $\R^2\setminus \{0\}$ implies the classification of cones with finite Morse index in $\R^2\setminus \{0\}$. However, to establish this implication, we must show that every regular cone $E \subset \R^n$ stationary for the $s$-perimeter and with finite Morse index in $\R^n \setminus \{0\}$ is stable in $\R^n\setminus \{0\}$.

    \begin{proposition}\label{prop: fmi implies stable outside}
    Let $s\in (0,1)$ and $E\subset \R^n$ be an $s$-minimal cone with $C^2$ boundary in $\R^n \setminus \{0\}$ and with finite Morse index in $\R^{n}\setminus \{0\}$ (see Definition \ref{def: Morse index}). Then $E$ is stable in $\R^n \setminus \{0\}$. 
\end{proposition}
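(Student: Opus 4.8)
The plan is to use the scale invariance of the cone together with a standard dilation/covering trick: finite Morse index means instability can only come from a bounded-dimensional space of test fields, and by the conical structure any such "bad directions" can be pushed off to infinity by dilations, leaving an exhausting family of regions where $E$ is genuinely stable. Let me set up the argument carefully. Suppose $E$ has Morse index at most $m$ in $\R^n \setminus \{0\}$ but is not stable there; then there is a vector field $X_0 \in C^2_c(\R^n \setminus \{0\} ; \R^n)$ with $\delta^2 {\rm Per}_s(E)[X_0] < 0$. Using the dilations $\lambda \mapsto \lambda E = E$, for $\lambda > 0$ define $X_\lambda(x) := X_0(x/\lambda)$; since $E$ is a cone, the second variation is homogeneous under this rescaling, so $\delta^2 {\rm Per}_s(E)[X_\lambda]$ has the same sign as $\delta^2 {\rm Per}_s(E)[X_0]$ for every $\lambda$, i.e.\ it is negative for all $\lambda$. (One should check the precise homogeneity weight from \eqref{eq: stab}: scaling $x \mapsto \lambda x$ on $\partial E$ multiplies $\varphi$, the measures, and the kernel by powers of $\lambda$, and the weight is irrelevant since we only need the sign.)

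Next I would choose a sequence $\lambda_1 \ll \lambda_2 \ll \cdots \ll \lambda_{m+1}$ growing fast enough that the supports $\supp X_{\lambda_1}, \dots, \supp X_{\lambda_{m+1}}$ (which are dilated annuli, hence at scales $\sim \lambda_k$) are pairwise "far apart" in the sense that the cross-interaction terms in the bilinear form associated to \eqref{eq: stab} between $X_{\lambda_k}$ and $X_{\lambda_\ell}$, $k \neq \ell$, are small compared to the diagonal terms. This is where the decay of the kernel $|x-y|^{-(n+s)}$ enters: if $\supp \varphi_k \subset B_{\rho_k} \setminus B_{r_k}$ with $r_{k+1} \geq 10 \rho_k$ say, then the mixed term $\iint \frac{\varphi_k(x)\varphi_\ell(y)}{|x-y|^{n+s}}\,d\sigma d\sigma$ is bounded by a small multiple of $(\|\varphi_k\|^2 + \|\varphi_\ell\|^2)$ for appropriately weighted norms, by the separation of supports. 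Hence for any coefficients $a_1, \dots, a_{m+1}$ with $\sum a_k^2 = 1$, expanding the quadratic form,
\begin{equation*}
    \delta^2 {\rm Per}_s(E)\Big[\sum_{k=1}^{m+1} a_k X_{\lambda_k}\Big] = \sum_{k=1}^{m+1} a_k^2\, \delta^2 {\rm Per}_s(E)[X_{\lambda_k}] + \sum_{k \neq \ell} a_k a_\ell\, B(X_{\lambda_k}, X_{\lambda_\ell}),
\end{equation*}
the diagonal part is $\leq -c_0 \sum a_k^2 = -c_0 < 0$ with $c_0 = -\delta^2{\rm Per}_s(E)[X_0] \cdot (\text{scaling factor})$, while the off-diagonal part is controlled by the separation and can be made, say, $\leq c_0/2$ in absolute value. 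This forces $\delta^2 {\rm Per}_s(E)[\sum a_k X_{\lambda_k}] < 0$ for every admissible choice of $a_k$, contradicting Morse index at most $m$. Therefore $E$ is stable in $\R^n \setminus \{0\}$.

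The main obstacle I anticipate is making the "cross-interaction terms are negligible" step fully rigorous with honest constants: one must track how the diagonal second-variation value scales with $\lambda$ (so that the normalization $c_0$ does not degenerate to $0$ as the scales separate) and simultaneously bound the bilinear cross terms uniformly. The cleanest way is probably to not rescale the test field but rather to work with $X_{\lambda_k}$ and absorb the scaling into a change of variables, so that every diagonal term equals exactly $\delta^2{\rm Per}_s(E)[X_0]$ (up to a fixed power of $\lambda_k$ that one can neutralize by further rescaling $a_k$ or by choosing the $X_{\lambda_k}$ with compensating amplitudes), reducing the whole issue to: the cross terms $\to 0$ as $\min_{k\neq\ell}\lambda_\ell/\lambda_k \to \infty$. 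That limiting statement follows from dominated convergence in \eqref{eq: stab} once the supports separate, since the kernel restricted to $\supp\varphi_k \times \supp\varphi_\ell$ is bounded and the domains have finite $(n-1)$-measure intersected with $\Sigma$. A secondary point to be careful about is that \eqref{eq: stab} is stated for fields supported where $\partial E$ is $C^2$, which is exactly $\R^n \setminus \{0\}$ here, so all test fields $X_{\lambda_k}$ are legitimate; and one should note the argument uses nothing about $n=2$, consistent with the statement being for general $\R^n$.
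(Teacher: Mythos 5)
Your strategy---assume instability, rescale a single destabilizing field to widely separated annular scales using the cone structure, and argue that cross-interactions are negligible---is in spirit the same mechanism as the paper's, which packages it into Lemma~\ref{lem: weigthed as} (a pigeonhole-type ``weighted almost-stability'' estimate for $m+1$ disjointly supported fields, via Young's inequality on the off-diagonal terms) followed by an appeal to \cite[Lemma 4.15]{CFSYau} for the cone-specific conclusion. However, the main step as you write it contains a genuine gap that your ``obstacle'' paragraph flags but does not resolve: the diagonal contribution is \emph{not} $-c_0\sum a_k^2$. Under $X_\lambda(x)=X_0(x/\lambda)$ one finds from \eqref{eq: stab} (scaling $d\sigma$ by $\lambda^{n-1}$, the kernel by $\lambda^{-(n+s)}$, and $\varphi,\nu$ by $\lambda^0$) that
\begin{equation*}
\delta^2\,{\rm Per}_s(E)[X_\lambda] = \lambda^{\,n-2-s}\,\delta^2\,{\rm Per}_s(E)[X_0],
\end{equation*}
so the diagonal entries $-c_0\lambda_k^{\,n-2-s}$ depend on $k$; for $n=2$ they shrink like $\lambda_k^{-s}$ as the scales separate, and it is not automatic that the cross terms shrink faster. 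The ``dominated convergence'' justification you invoke at the end is not the right tool here---the integrand and domains both change with $\lambda$---and, more importantly, it ignores the competition between the decaying diagonal and the decaying off-diagonal.

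The correct fix, which is what \cite[Lemma 4.15]{CFSYau} carries out (and what the paper's Lemma~\ref{lem: weigthed as} prepares for with its weights $\lambda_{i\ell}$), is to normalize $\tilde X_k := \lambda_k^{-(n-2-s)/2}X_{\lambda_k}$ so that each diagonal term equals $-c_0$. Then a direct separation estimate (not dominated convergence) gives $\|\tilde\varphi_k\|_{L^1(\Sigma)}\lesssim \lambda_k^{(n+s)/2}$ and ${\rm dist}(\supp\tilde\varphi_k,\supp\tilde\varphi_\ell)\gtrsim\lambda_\ell$ for $k<\ell$, whence the cross term is $\lesssim(\lambda_k/\lambda_\ell)^{(n+s)/2}$; this is summable once the ratios $\lambda_{k+1}/\lambda_k$ are chosen large, and Young's inequality then contradicts Morse index $\le m$. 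Two further points you should make explicit: (i) the bound $\|\tilde\varphi_k\|_{L^1(\Sigma)}\lesssim\lambda_k^{(n+s)/2}$ uses a uniform $\mathcal{H}^{n-1}$-density bound for $\Sigma$, which the paper derives from the perimeter estimate for stationary nonlocal minimal surfaces (\cite[Theorem 5.4]{Enric}) and which for a cone amounts to $\mathcal{H}^{n-2}(\Sigma\cap S^{n-1})<\infty$; (ii) one also needs $\per_s(E;B_1)<\infty$ for the second variation to be well defined, which the paper likewise takes from \cite[Theorem 5.4]{Enric}. With these additions your proof becomes essentially a self-contained reproof of \cite[Lemma 4.15]{CFSYau}.
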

    
    In the classical case of the perimeter (formally $s=1$), this property follows easily by a scaling argument; if $E$ were unstable in $\R^n\setminus \{0\}$, one could construct infinitely many disjoint scaled copies of an unstable variation, contradicting the finite index assumption. The fractional setting presents additional difficulty since the nonlocal interactions between different scales prevent such a direct argument, as functions with disjoint support are not orthogonal for the $H^s$ scalar product. 

\vsp 
This kind of result has been previously established in \cite{CFSYau} for blow-ups of $s$-minimal surfaces arising as limits of the fractional Allen-Cahn equation. Our proof follows a similar strategy and is essentially already contained in the union of \cite{CFSYau} and \cite{Enric}. Nevertheless, since the result as needed in this work is never stated explicitly nor proved, we provide a proof in this section. 

\begin{lemma}\label{lem: weigthed as}
    Let $\mathcal{U} \subset \R^n$ be an open set and $E$ be an $s$-minimal surface in $\mathcal{U}$ with Morse index at most $m$ in $\mathcal{U}$ (see Definition \ref{def: Morse index}). Assume also that $\Sigma := \partial E $ is $C^2$ in $\mathcal{U}$. For every bounded Lipschitz domain $\Omega \Subset \mathcal{U}$ let $X_1,X_2,...,X_{m+1} \in C_c^2(\Omega; \R^n)$ be vector fields with disjoint compact supports $A_1,...,A_{m+1} \subset \Omega $, and denote $D_{k\ell}:=\textnormal{dist}(A_k,A_\ell)$.
For $1\leq i<\ell \leq m+1$, fix positive weights $\lambda_{i\ell}>0$. Then, for at least one of the $i$ (depending on $E$) we have that
\begin{equation*}
\delta^2 {\rm Per}_s(E; A_i)[X_i] \geq - C \|X_i\|_{L^\infty}^2\textnormal{diam}(A_i)^{2(n-1)}\left(\sum_{\ell<i} \frac{1}{\lambda_{\ell i}} D_{i \ell}^{-(n+s)}
    +\sum_{\ell>i} \lambda_{i \ell} D_{i\ell}^{-(n+s)}\right) ,
\end{equation*}
for some constant $C=C(n,s,m)>0$.
\end{lemma}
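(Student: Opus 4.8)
The idea: Morse index $m$ means among any $m+1$ vector fields there's a normalized linear combination with nonnegative second variation. The second variation is a quadratic form; expanding a linear combination $\sum a_i X_i$ produces diagonal terms $\sum a_i^2 \delta^2\mathrm{Per}_s[X_i]$ plus cross terms. Since the supports are disjoint, the cross terms in the second variation formula \eqref{eq: stab} only come from the nonlocal kernel interaction across $A_k \times A_\ell$ — there is no "local" overlap — and these are controlled by $D_{k\ell}^{-(n+s)}$ times $L^\infty$ norms and the measures of the supports (bounded by $\mathrm{diam}(A_i)^{n-1}$ since $\Sigma$ is an $(n-1)$-dimensional hypersurface). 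So I expect the proof to go as follows.

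First, write out $\delta^2\mathrm{Per}_s(E)[\sum_i a_i X_i]$ using \eqref{eq: stab} with $\varphi = \sum a_i \varphi_i$, $\varphi_i = X_i\cdot\nu$. Because the $\varphi_i$ have disjoint supports, expand $|\varphi(x)-\varphi(y)|^2 = \sum_i |\varphi_i(x)-\varphi_i(y)|^2 + 2\sum_{i<\ell}(\varphi_i(x)-\varphi_i(y))(\varphi_\ell(x)-\varphi_\ell(y))$, and the curvature term $\varphi(x)^2 = \sum_i \varphi_i(x)^2$ splits cleanly with no cross terms. Hence
$$\delta^2\mathrm{Per}_s(E)\Big[\sum_i a_i X_i\Big] = \sum_i a_i^2\, \delta^2\mathrm{Per}_s(E)[X_i] + 2\sum_{i<\ell} a_i a_\ell\, Q_{i\ell},$$
where $Q_{i\ell} = \iint_{\Sigma\times\Sigma}\frac{(\varphi_i(x)-\varphi_i(y))(\varphi_\ell(x)-\varphi_\ell(y))}{|x-y|^{n+s}}\,d\sigma d\sigma$. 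Since $\mathrm{supp}\,\varphi_i\subset A_i$ and $\mathrm{supp}\,\varphi_\ell\subset A_\ell$ are disjoint, only the region $x\in A_i, y\in A_\ell$ (and symmetric) contributes, so $|Q_{i\ell}| \le 2\|X_i\|_{L^\infty}\|X_\ell\|_{L^\infty}\,\mathcal{H}^{n-1}(\Sigma\cap A_i)\,\mathcal{H}^{n-1}(\Sigma\cap A_\ell)\, D_{i\ell}^{-(n+s)} \le C \|X_i\|_{L^\infty}\|X_\ell\|_{L^\infty}\,\mathrm{diam}(A_i)^{n-1}\mathrm{diam}(A_\ell)^{n-1}D_{i\ell}^{-(n+s)}$.

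Next, apply the finite index hypothesis to $X_1,\dots,X_{m+1}$: there are $a_i$ with $\sum a_i^2 = 1$ and $\delta^2\mathrm{Per}_s(E)[\sum a_i X_i]\ge 0$. Pick $i$ with $a_i^2$ maximal, so $a_i^2 \ge 1/(m+1)$, in particular $|a_i|\ge (m+1)^{-1/2}$ and $|a_\ell|\le 1$. Rearranging,
$$a_i^2\,\delta^2\mathrm{Per}_s(E)[X_i] \ge -\sum_{\ell\ne i}a_\ell^2\,\delta^2\mathrm{Per}_s(E)[X_\ell] - 2\sum_{k<\ell}a_k a_\ell Q_{k\ell}.$$
This is awkward because the right side still has $\delta^2\mathrm{Per}_s(E)[X_\ell]$ terms, which we don't control. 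The cleaner route — and I suspect the intended one — is to instead use a Cauchy–Schwarz/Young split on the cross terms to absorb them into the diagonal and obtain a lower bound involving only the $Q$'s; this is exactly where the weights $\lambda_{i\ell}$ enter. Apply $2a_i a_\ell Q_{i\ell} \ge -a_i^2 \lambda_{i\ell}^{-1}|Q_{i\ell}| \cdot(\text{stuff}) - \dots$; more precisely, for each pair use $2|a_i a_\ell Q_{i\ell}| \le \lambda_{i\ell}^{-1} a_i^2 |Q_{i\ell}| + \lambda_{i\ell} a_\ell^2 |Q_{i\ell}|$... but this still leaves $a_\ell^2$ multiplied by positive junk, not the bad diagonal terms. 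The actual mechanism must be: one picks $X_1,\dots,X_{m+1}$ to all be unstable, i.e. $\delta^2\mathrm{Per}_s(E)[X_\ell] < 0$, run the argument with those; then $-\sum_{\ell\ne i}a_\ell^2\delta^2\mathrm{Per}_s(E)[X_\ell] \ge 0$ can be dropped, and we're left with $a_i^2\delta^2\mathrm{Per}_s(E)[X_i]\ge -2\sum_{k<\ell}|a_k a_\ell Q_{k\ell}|$, then divide by $a_i^2\ge 1/(m+1)$ and bound each $Q_{k\ell}$. But the statement as written claims it for \emph{arbitrary} $X_i$, so the honest argument is: the $X_\ell$ diagonal terms with $\ell\ne i$ carry a favorable sign only sometimes; instead one symmetrizes over which index is selected and splits the cross term asymmetrically using $\lambda_{i\ell}$ so that the term $\lambda_{\ell i}^{-1}$ lands with $X_i$ (index $\ell < i$ case) and $\lambda_{i\ell}$ with $X_i$ (index $\ell > i$ case) — which is precisely the asymmetric sum structure in the statement. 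So: for each ordered pair, write $2|a_k a_\ell Q_{k\ell}| \le \lambda_{k\ell} a_k^2 |Q_{k\ell}|/\|X_k\|^2\cdots$ — I'll choose the Young split so that, for the selected index $i$, all cross terms involving $i$ get an $a_i^2$ factor with coefficient $\lambda_{\ell i}^{-1}$ (when $\ell < i$) or $\lambda_{i\ell}$ (when $\ell > i$), while the $a_\ell^2$ pieces of those splits, together with all cross terms not involving $i$, are bounded by $\sum_\ell a_\ell^2 = 1$ times a constant $C(n,s,m)$ — and crucially these leftover pieces have a definite sign issue only if they could be negative, but $|Q_{k\ell}|\ge 0$ so after taking absolute values everything dropped is $\le$ something nonnegative being subtracted, which is the wrong direction...

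**The real obstacle.** The genuine difficulty, and the step I'd spend the most care on, is the bookkeeping that turns "index $\le m$" into a lower bound for \emph{one specific} $\delta^2\mathrm{Per}_s(E)[X_i]$ with the asymmetric weighted form stated. The clean resolution is the pigeonhole-over-pairs trick from \cite{CFSYau}: since the index is $\le m$, among the $m+1$ fields one cannot have all of $X_1,\dots,X_{m+1}$ "strictly unstable past the error threshold"; more operationally, consider the $m+1$ values and suppose for contradiction that $\delta^2\mathrm{Per}_s(E)[X_i] < -(\text{claimed bound})$ for \emph{every} $i$; then for the index-realizing combination $\sum a_i X_i$ with $\sum a_i^2=1$, estimate $0\le \delta^2\mathrm{Per}_s(E)[\sum a_i X_i] = \sum a_i^2\delta^2\mathrm{Per}_s(E)[X_i] + 2\sum_{k<\ell}a_k a_\ell Q_{k\ell} < -\sum_i a_i^2(\text{bound}_i) + 2\sum_{k<\ell}|a_k||a_\ell||Q_{k\ell}|$, and the weighted AM–GM split $2|a_k||a_\ell||Q_{k\ell}| \le \lambda_{k\ell}^{-1}a_k^2|Q_{k\ell}| + \lambda_{k\ell}a_\ell^2|Q_{k\ell}|$ (with the bound for $\mathrm{bound}_i$ chosen as exactly $C\|X_i\|_{L^\infty}^2\mathrm{diam}(A_i)^{2(n-1)}(\sum_{\ell<i}\lambda_{\ell i}^{-1}D_{i\ell}^{-(n+s)} + \sum_{\ell>i}\lambda_{i\ell}D_{i\ell}^{-(n+s)})$) makes the two sides cancel term-by-term with the right constant $C=C(n,m)$, yielding $0 < 0$, a contradiction. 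Hence the claimed inequality holds for at least one $i$. I would write this contradiction argument carefully, matching each $\lambda$-weighted piece of the cross-term bound with the corresponding summand in $\mathrm{bound}_i$; the bound $|Q_{k\ell}| \le C\|X_k\|_\infty\|X_\ell\|_\infty \mathrm{diam}(A_k)^{n-1}\mathrm{diam}(A_\ell)^{n-1}D_{k\ell}^{-(n+s)}$ and $2|a_k||a_\ell|\|X_k\|_\infty\|X_\ell\|_\infty \le \|X_k\|_\infty^2 a_k^2 \cdot(\cdots)$ absorbing $\|X_\ell\|_\infty\mathrm{diam}(A_\ell)^{n-1}$-type factors into the weight choice is the routine-but-fiddly part. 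Everything else — the disjoint-support expansion, the hypersurface-measure bound $\mathcal H^{n-1}(\Sigma\cap A_i)\le C\,\mathrm{diam}(A_i)^{n-1}$ (using that $\Sigma$ is $C^2$, hence locally a graph with controlled area, or simply the a priori BV bound), and the final division — is straightforward.
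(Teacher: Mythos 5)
Your proposal is essentially the paper's argument: expand $\delta^2{\rm Per}_s(E)[a_1X_1+\dotsc+a_{m+1}X_{m+1}]$ using that the normal components $\xi_i=X_i\cdot\nu_\Sigma$ have disjoint supports, bound each cross term by $2|a_ia_\ell|\,D_{i\ell}^{-(n+s)}\|\xi_i\|_{L^1(\Sigma\cap A_i)}\|\xi_\ell\|_{L^1(\Sigma\cap A_\ell)}$, split it with a weighted Young inequality using the given $\lambda$'s, and use the index bound to pigeonhole one good index. Your contradiction set-up is just the contrapositive of the paper's direct one: after the split, the quadratic form is bounded above by the diagonal expression $\sum_i a_i^2\bigl[\delta^2{\rm Per}_s(E)[X_i]+\|\xi_i\|_{L^1}^2\bigl(\sum_{\ell<i}\lambda_{\ell i}^{-1}D_{i\ell}^{-(n+s)}+\sum_{\ell>i}\lambda_{i\ell}D_{i\ell}^{-(n+s)}\bigr)\bigr]$, and index at most $m$ forces one bracket to be nonnegative; the detours through ``choose $a_i^2$ maximal'' and ``assume all $X_\ell$ unstable'' are unnecessary.

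Two points in your write-up need repair. First, the Young split you actually state, $2|a_ka_\ell||Q_{k\ell}|\le\lambda_{k\ell}^{-1}a_k^2|Q_{k\ell}|+\lambda_{k\ell}a_\ell^2|Q_{k\ell}|$, keeps the mixed factor $\|X_k\|_{L^\infty}\|X_\ell\|_{L^\infty}{\rm diam}(A_k)^{n-1}{\rm diam}(A_\ell)^{n-1}$ attached to both pieces, so the piece carrying $a_i^2$ is not dominated by $\|X_i\|_{L^\infty}^2{\rm diam}(A_i)^{2(n-1)}$ (this fails badly when $\|X_\ell\|_{L^\infty}{\rm diam}(A_\ell)^{n-1}$ is much larger than $\|X_k\|_{L^\infty}{\rm diam}(A_k)^{n-1}$), and since the $\lambda_{i\ell}$ are prescribed in the statement you cannot ``absorb factors into the weight choice''. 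The correct move, which is exactly what the paper does, is to split the product itself: $2\bigl(|a_k|\|\xi_k\|_{L^1}\bigr)\bigl(|a_\ell|\|\xi_\ell\|_{L^1}\bigr)\le\lambda_{k\ell}a_k^2\|\xi_k\|_{L^1}^2+\lambda_{k\ell}^{-1}a_\ell^2\|\xi_\ell\|_{L^1}^2$ for $k<\ell$, which produces precisely the asymmetric sums of the statement (your final display also has the $\lambda\leftrightarrow 1/\lambda$ orientation flipped relative to the statement; this is harmless up to relabeling the weights but should be fixed to match). Second, the bound $\mathcal H^{n-1}(\Sigma\cap A_i)\le C\,{\rm diam}(A_i)^{n-1}$ does not follow from $C^2$ regularity of $\Sigma$ alone, since the area inside $A_i$ could be arbitrarily large; the paper gets it by writing $\mathcal H^{n-1}(\Sigma\cap A_i)\le{\rm Per}(E;B_{{\rm diam}(A_i)}(z))\le C\,{\rm diam}(A_i)^{n-1}$, i.e.\ from a perimeter estimate for stationary $s$-minimal surfaces, and that is the justification you should invoke rather than ``locally a graph''.
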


\begin{proof} 
The statement is a more precise version of Lemma 5.8 in \cite{Enric}, and the proof proceeds similarly. Using the second variation formula, which can be used since $\Sigma$ is $C^2$ in $\mathcal{U}$, we compute the second variation of $E$ for linear combinations of $m+1$ vector fields $X_i$ , supported each in the corresponding $A_i$. We denote by $\xi_i := X_i \cdot \nu_\Sigma$ the scalar normal component of these vector fields. 

By the second variation formula of Theorem \ref{thm: second var} we have  
\begin{equation*}
\begin{split}
    \delta^2 {\rm Per}_s(E; \Omega) &[a_1 X_1+a_2 X_2+\dotsc+a_{m+1} X_{m+1}]   \\ \vspace{6pt}
    & = a_1^2\delta^2 {\rm Per}_s(E; A_1)[X_1] + \dotsc+a_{m+1}^2\delta^2 {\rm Per}_s(E; A_{m+1})[X_{m+1}] \\
    & \hspace{0.2cm} +2 a_1a_2 \iint_{ (\Sigma \cap A_1) \times (\Sigma \cap A_2)} \frac{(\xi_1(x) - \xi_1(x))(\xi_2(y) - \xi_2(y))}{|x-y|^{n+s}} d\sigma(x)d\sigma(y)\\
    & \hspace{0.2cm}+\dotsc\\
    & \hspace{0.2cm}+2 a_m a_{m+1} \iint_{(\Sigma \cap A_m) \times (\Sigma\cap A_{m+1})} \frac{(\xi_m(x) - \xi_m(x))(\xi_{m+1}(y) - \xi_{m+1}(y))}{|x-y|^{n+s}} d\sigma(x)d\sigma(y).
\end{split}
\end{equation*}
Recall that the supports of $\xi_i$ and $\xi_j$ are $A_i$ and $A_j$ respectively, which are disjoint. Then, the term containing the double integral over $A_i \times A_j$ with $i<j$ can be bounded as
\begin{equation*}
\begin{split}
   2 a_ia_j  \iint_{(\Sigma \cap A_i) \times (\Sigma \cap A_j)} & \frac{(\xi_i(x) - \xi_i(x))(\xi_j(y) - \xi_j(y))}{|x-y|^{n+s}} d\sigma(x)d\sigma(y) \\ &= -2a_ia_j\iint_{(\Sigma \cap A_i) \times (\Sigma \cap A_j)} \frac{\xi_i(x) \xi_j(y)}{|x-y|^{n+s}} d\sigma(x)d\sigma(y) \\
    &\leq 2|a_ia_j| D_{ij}^{-(n+s)}\|\xi_i\|_{L^1(\Sigma \cap A_i)}\|\xi_j\|_{L^1(\Sigma \cap A_j)} \\
     &\leq \lambda_{ij} a_i^2  D_{ij}^{-(n+s)}\|\xi_i\|_{L^1(\Sigma \cap A_i)}^2+\frac{1}{\lambda_{i j}} a_j^2D_{i j}^{-(n+s)}\|\xi_j\|_{L^1(\Sigma \cap A_j)}^2 ,
\end{split}
\end{equation*}
where we have used Young's inequality in the last line. Substituting this into the second variation expression above gives
\begin{align*}
    \delta^2 {\rm Per}_s(E; \Omega) & [a_1 X_1+a_2 X_2+\dotsc+a_{m+1} X_{m+1}]   \\  \le
    &  \sum_{i=1}^{m+1} a_i^2\left[\delta^2 {\rm Per}_s(E; A_i)[X_i] +\|\xi_i\|_{L^1(\Sigma \cap A_i)}^2 \left(\sum_{j<i} \frac{1}{\lambda_{ji}} D_{ij}^{-(n+s)}
    +\sum_{j>i} \lambda_{ij} D_{ij}^{-(n+s)} \right)\right].
\end{align*}

The condition that the Morse index is at most $m$ implies that the expression cannot be $<0$ for all $(a_1,\dotsc,a_{m+1})\neq 0$. Hence, we find that there must exist some $i$ such that 
\begin{equation*}
\begin{split}
    \delta^2 {\rm Per}_s(E; A_i)[X_i] \geq - \|\xi_i\|_{L^1(\Sigma \cap A_i)}^2 \left(\sum_{j<i} \frac{1}{\lambda_{ji}} D_{ij}^{-(n+s)}
    +\sum_{j>i} \lambda_{ij} D_{ij}^{-(n+s)} \right)
\end{split}
\end{equation*}
holds for all $\xi_i \in C_c^1(\Sigma \cap A_i)$. Moreover, for $z\in A_i $, we have
\begin{align*}
    \|\xi_i\|_{L^1(\Sigma \cap A_i)}^2 & = \left( \int_{A_i \cap \Sigma} X_i \cdot \nu_\Sigma \, d\sigma  \right)^2 \le \| X_i\|^2_{L^\infty} \mathcal{H}^{n-1}(A_i \cap \Sigma)^2 \\ & \le  \| X_i\|^2_{L^\infty} {\rm Per}(E, B_{{\rm diam}(A_i)}(z))^2 \le C \| X_i\|^2_{L^\infty}    {\rm diam}(A_i)^{2(n-1)} . 
\end{align*}
Putting everything together concludes the proof. 
\end{proof}

Recall also the following result, which is \cite[Lemma 4.15]{CFSYau}.

\begin{lemma}\label{lem: stable cones CFS} Let $s\in (0,1)$ and $E \subset \R^n$ be a cone with ${\rm Per}_s(E, B_1(0)) <+\infty $. Assume that $E$ is an $s$-minimal surface and that satisfies the property in Lemma \ref{lem: weigthed as} with $\mathcal{U} = \R^n \setminus \{0\}$. Then $E$ is stable in $\R^n\setminus \{0\}$.
\end{lemma}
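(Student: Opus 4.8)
The plan is to argue by contradiction, exploiting the exact homogeneity of the second variation on a cone to build infinitely many mutually ``almost orthogonal'' unstable variations placed at well‑separated scales. Suppose $E$ is not stable in $\R^n\setminus\{0\}$. Then, by Theorem~\ref{thm: second var}, there is $X\in C^2_c(\R^n\setminus\{0\};\R^n)$ with $\delta^2{\rm Per}_s(E)[X]=-\delta_0<0$. The key structural fact is that, $E$ being a cone, its unit normal $\nu_\Sigma$ is $0$‑homogeneous: for $\lambda>0$ the rescaled field $X_\lambda(x):=X(x/\lambda)$ has normal component $\varphi_\lambda(x)=(X\cdot\nu_\Sigma)(x/\lambda)$, and substituting $x=\lambda x'$, $y=\lambda y'$ in \eqref{eq: stab}, together with $\Sigma=\lambda\Sigma$, $\nu_\Sigma(\lambda x')=\nu_\Sigma(x')$, yields the exact scaling identity
\[
\delta^2{\rm Per}_s(E)[X_\lambda]=\lambda^{\,2(n-1)-(n+s)}\,\delta^2{\rm Per}_s(E)[X]=-\delta_0\,\lambda^{\,n-2-s}.
\]
In particular, after replacing $X$ by a rescaled copy we may assume $\supp X\subset B_R\setminus B_1$ for some fixed $R>1$ (this only changes $\delta_0$ by a positive factor). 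Note also that the sign of $n-2-s$ will be irrelevant, since it cancels in the end.

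Next I would feed $m+1$ widely separated copies into the property of Lemma~\ref{lem: weigthed as}. Fix $\mu>2R$ and a large integer $G$, set $k_j:=jG$ and $X_j:=X_{\mu^{k_j}}$ for $j=1,\dots,m+1$; these are smooth with pairwise disjoint compact supports $A_j\subset B_{R\mu^{k_j}}\setminus B_{\mu^{k_j}}\subset\R^n\setminus\{0\}$, and elementary estimates give $\diam(A_j)\le 2R\mu^{k_j}$, $\|X_j\|_{L^\infty}=\|X\|_{L^\infty}$, and $D_{j\ell}:=\dist(A_j,A_\ell)\ge\tfrac12\mu^{\max(k_j,k_\ell)}$. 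Take all weights equal to a single constant, $\lambda_{i\ell}\equiv\Lambda$. For the index $i$ that Lemma~\ref{lem: weigthed as} produces, one estimates the two error sums separately, and the crux is that the homogeneities match exactly: for $\ell<i$ one has $\diam(A_i)^{2(n-1)}D_{i\ell}^{-(n+s)}\le C\,\mu^{(2(n-1)-(n+s))k_i}=C\,\mu^{(n-2-s)k_i}$, while for $\ell>i$ one has $\diam(A_i)^{2(n-1)}D_{i\ell}^{-(n+s)}\le C\,\mu^{2(n-1)k_i-(n+s)k_\ell}=C\,\mu^{(n-2-s)k_i}\mu^{-(n+s)(k_\ell-k_i)}$ with $k_\ell-k_i=(\ell-i)G$. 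Summing, the total error in Lemma~\ref{lem: weigthed as} is at most
\[
C'\Big(\tfrac{m}{\Lambda}+\Lambda\,\tfrac{\mu^{-(n+s)G}}{1-\mu^{-(n+s)G}}\Big)\,\mu^{(n-2-s)k_i},
\]
where $C'$ depends only on $n,s,m,\|X\|_{L^\infty},R$ and on the density bound of the cone (it is here, and only here, that ${\rm Per}_s(E;B_1)<+\infty$ together with the cone structure is used, to guarantee that the constant in Lemma~\ref{lem: weigthed as} — which hides a bound $\mathrm{Per}(E;B_r(z))\le C r^{n-1}$ — is uniform in the scale $k_i$).

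Finally I would choose the parameters in this order: first fix $\mu$; then fix $\Lambda$ so large that $C'm/\Lambda\le\delta_0/4$; then fix $G$ so large that $C'\Lambda\,\mu^{-(n+s)G}/(1-\mu^{-(n+s)G})\le\delta_0/4$. With these choices, whatever index $i$ Lemma~\ref{lem: weigthed as} returns, the lemma gives $\delta^2{\rm Per}_s(E)[X_i]\ge-\tfrac{\delta_0}{2}\,\mu^{(n-2-s)k_i}$, while the scaling identity gives $\delta^2{\rm Per}_s(E)[X_i]=-\delta_0\,\mu^{(n-2-s)k_i}$; dividing by the positive number $\mu^{(n-2-s)k_i}$ yields $\delta_0\le\delta_0/2$, a contradiction, so $E$ is stable in $\R^n\setminus\{0\}$. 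The main obstacle — indeed the heart of the matter — is exactly this exponent bookkeeping: a copy of the variation at scale $\lambda$ contributes self‑energy of order $\lambda^{\,2(n-1)-(n+s)}=\lambda^{\,n-2-s}$, which is precisely the rate at which the cross‑interaction term $\diam(A_i)^{2(n-1)}D_{i\ell}^{-(n+s)}$ scales, so the cross terms between scales would be unbeatable were it not for the two genuinely small parameters $\Lambda^{-1}$ (from Young's inequality) and $\mu^{-(n+s)G}$ (from separating the scales) that Lemma~\ref{lem: weigthed as} makes available; getting the quantifiers on $\mu,\Lambda,G$ in the right order is the only delicate point.
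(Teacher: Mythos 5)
Your proof is correct, and it is worth noting that the paper itself does not prove this lemma at all: it is quoted as \cite[Lemma 4.15]{CFSYau}, so there is no in-paper argument to compare against, and your proposal supplies the missing self-contained proof along what is essentially the intended route. The two pillars of your argument check out: the exact homogeneity $\delta^2{\rm Per}_s(E)[X_\lambda]=\lambda^{\,n-2-s}\,\delta^2{\rm Per}_s(E)[X]$ for a cone (using $\lambda\Sigma=\Sigma$, the $0$-homogeneity of $\nu_\Sigma$, and the fact that \eqref{eq: stab} depends only on the normal component), and the bookkeeping showing that $\diam(A_i)^{2(n-1)}D_{i\ell}^{-(n+s)}$ carries exactly the same factor $\mu^{(n-2-s)k_i}$ as the self-energy, so that the only smallness available is $\Lambda^{-1}$ against the finitely many lower scales and $\mu^{-(n+s)G}$ against the higher scales; this is precisely what the asymmetric weights in Lemma \ref{lem: weigthed as} are designed to exploit, your choice $\lambda_{i\ell}\equiv\Lambda$ uses them correctly, and the order of quantifiers ($\mu$ fixed, then $\Lambda$, then $G$, with $C'$ independent of $\Lambda$, $G$ and $k_i$) is right, so the contradiction $\delta_0\le\delta_0/2$ holds for whichever index $i$ the assumed property returns. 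One small inaccuracy: the hypothesis ${\rm Per}_s(E;B_1)<+\infty$ is not used where you say it is, since the property of Lemma \ref{lem: weigthed as} is \emph{assumed} with its constant $C(n,s,m)$ already in place, so no uniform density bound needs to be re-derived inside your argument; its role (in the cited reference, and implicitly here) is rather to guarantee that the second-variation integrals in \eqref{eq: stab} for test fields supported in annuli are finite, so that the negation of stability and the scaling identity make sense. This misattribution is harmless and does not affect the validity of the proof.
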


With this, we can easily deduce Proposition \ref{prop: fmi implies stable outside}. 

\begin{proof}[Proof of Proposition \ref{prop: fmi implies stable outside}] 
Since $E$ has finite Morse index in $\R^n \setminus \{0\}$, it satisfies the property of Lemma \ref{lem: weigthed as} with $\mathcal{U} = \R^n \setminus \{0\}$. Moreover, by \cite[Theorem 5.4]{Enric} and a simple covering argument we have that 
\begin{equation*}
    {\rm Per}_s(E, B_1(0)) <+\infty . 
\end{equation*}
From here, the result follows by Lemma \ref{lem: stable cones CFS}.  
\end{proof}

\section{Appendix}\makeatletter\def\@currentlabel{Appendix}\makeatother\label{sec: appendix}

Here, we give a direct proof that $\textbf{X} = \{xy>0 \} \subset \R^2$ is unstable for variations compactly supported in $\R^2 \setminus \{0\}$. First, note that by symmetry, it is clear that $\textbf{X}$ satisfies \eqref{eq: fvar} at every $x\in \partial \textbf{X}$ and for every $s\in (0,1)$. Hence, since $\textbf{X}$ is smooth away from the origin, $\textbf{X}$ is an $s$-minimal surface for every $s\in (0,1)$.  

\begin{proposition}
 There exists $s_\circ \in (0,1/2) $ such that, for $s\in (0,s_\circ)$, $\textbf{X}$ is unstable in $\R^2 \setminus \{0\}$.
\end{proposition}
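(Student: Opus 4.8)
The plan is to run, in the concrete case of the cross ($N=2$), the same mechanism as in the proof of Theorem \ref{thm: main}, keeping only the single interaction term that carries the $1/s^2$ divergence. Write $\Sigma=\partial\textbf{X}$ as the union of the four coordinate half-axes $R_1=\{x_1>0,\,x_2=0\}$, $R_2=\{x_1=0,\,x_2>0\}$, $R_3=\{x_1<0,\,x_2=0\}$, $R_4=\{x_1=0,\,x_2<0\}$, with the outer unit normals induced by the two quadrants constituting $\textbf{X}$, so that $\nu_1=-e_2$, $\nu_2=-e_1$, $\nu_3=e_2$, $\nu_4=e_1$. Since $\textbf{X}$ is an $s$-minimal surface and is smooth away from the origin, by the stability inequality of Theorem \ref{thm: second var} it suffices to exhibit $\varphi\in C^2_c(\R^2\setminus\{0\})$ with
\[
\iint_{\Sigma\times\Sigma}\frac{|\nu_\Sigma(x)-\nu_\Sigma(y)|^2}{|x-y|^{2+s}}\,\varphi(x)^2\,d\sigma(x)\,d\sigma(y)\;>\;\iint_{\Sigma\times\Sigma}\frac{|\varphi(x)-\varphi(y)|^2}{|x-y|^{2+s}}\,d\sigma(x)\,d\sigma(y).
\]
I would take $\varphi(x)=\xi(|x|)$ with $\xi\in C^2_c(\R\setminus\{0\})$ the even, nontrivial function saturating Hardy's inequality on $(0,\infty)$ as in \eqref{eq: Hardy sat}; this $\varphi$ lies in $C^2_c(\R^2\setminus\{0\})$ since $\xi$ is supported away from the origin, and, being a nonzero continuous compactly supported function, $\xi$ is nonconstant, so $I:=\int_0^\infty\!\int_0^\infty|\xi(a)-\xi(b)|^2\,|a-b|^{-2-s}\,da\,db>0$.

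For the right-hand side I would split $\Sigma\times\Sigma$ into the $16$ pieces $R_i\times R_j$ and, exactly as in the proof of Theorem \ref{thm: main}, use $|x-y|\ge\big||x|-|y|\big|$ to bound each of them by $I$, so that the right-hand side is at most $16\,I$. For the left-hand side I would keep only the $R_1$--$R_3$ term of the sum: there the normals are constant, $|\nu_\Sigma(x)-\nu_\Sigma(y)|^2=|\nu_1-\nu_3|^2=4$, so this term equals $4\iint_{R_1\times R_3}\varphi(x)^2\,|x-y|^{-2-s}$, and parametrizing $x=(a,0)$, $y=(-b,0)$ gives $|x-y|=a+b$, whence by the elementary identity $\int_0^\infty(a+b)^{-2-s}\,db=\frac{1}{1+s}\,a^{-1-s}$ this term equals $\frac{4}{1+s}\int_0^\infty\xi(a)^2\,a^{-1-s}\,da$. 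By the saturation property \eqref{eq: Hardy sat}, this is $\ge\frac{c}{s^2}\,I$ for every $s\in(0,1/2)$, for some absolute $c>0$. Hence
\[
\delta^2{\rm Per}_s(\textbf{X})[X]\;\le\;\Big(16-\frac{c}{s^2}\Big)\,I\;<\;0
\]
as soon as $s$ is smaller than some $s_\circ\in(0,1/2)$, which proves that $\textbf{X}$ is unstable in $\R^2\setminus\{0\}$.

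Beyond the bookkeeping already carried out in Theorem \ref{thm: main}, the only step needing any care is the choice of test function together with the observation that, among all the terms of the curvature part, the $R_1$--$R_3$ interaction reduces precisely to the Hardy weight $\int_0^\infty\xi^2\,a^{-1-s}$ via the one-line identity above, whose $1/s^2$ blow-up through \eqref{eq: Hardy sat} overwhelms the $O(1)$ size of the full Dirichlet energy; one should also note the favorable sign of that term, which holds because $R_1$ and $R_3$ are antipodal (equivalently $1-\cos\pi=2>0$), and that $\varphi=\xi(|\cdot|)$ restricted to $\Sigma$ is an admissible normal component, which is immediate for a radial function. I do not expect a genuine obstacle: as announced in the introduction, this appendix is a short, self-contained distillation of the argument behind Theorem \ref{thm: main}.
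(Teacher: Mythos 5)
Your proof is correct and follows essentially the same approach as the paper: test the stability inequality with the radial function $\xi(|\cdot|)$ saturating Hardy's inequality as in \eqref{eq: Hardy sat}, exploit the resulting $1/s^2$ factor to make the curvature term dominate, and bound the Sobolev part from above by the one-dimensional Gagliardo energy of $\xi$ via $|x-y|\ge\big||x|-|y|\big|$. The only difference is cosmetic: you keep just one antipodal-pair interaction in the curvature part (giving $\tfrac{4}{1+s}\int_0^\infty\xi^2 a^{-1-s}\,da$ directly), while the paper also computes the perpendicular-ray interactions exactly via a Beta-type identity; since those extra terms are nonnegative and $O(1)$ in $s$, dropping them loses nothing.
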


\begin{proof} 
Let $L_x := \{(x,0) \in \R^2 \, : \, x>0\} $ and $L_y := \{(0,y) \in \R^2\, : \, y>0\} $. Choose a radial test function $\varphi = \varphi(|x|) \in C^2_c(\R^2\setminus \{0\})$. With a little abuse of notation, we still denote by $\varphi$ its trace on the lines in $\partial \textbf{X}$. Let also $\nu$ be the outer unit normal to $\partial \textbf{X}$. 

On the one hand 
\begin{align*}
     \iint_{\partial \textbf{X} \times \partial \textbf{X}} & \frac{|\nu(x)- \nu(y)|^2}{|x-y|^{2+s}}  \varphi(|x|)^2  \, d\sigma(x) d\sigma(y) \\ & = 4 \int_0^\infty \hspace{-4pt} \int_{-\infty}^0 \frac{2^2}{|x-y|^{2+s}} \varphi(x)^2  dxdy + 8 \int_{ L_x } \int_{ L_y } \frac{(\sqrt{2})^2}{|x-y|^{2+s}}  \varphi(|x|)^2 d\sigma(x)d\sigma(y) \\ & = 16 \int_0^\infty \hspace{-4pt} \int_{-\infty}^0 \frac{\varphi(x)^2}{|x-y|^{2+s}} \,  dxdy + 16 \int_{ L_x } \int_{ L_y } \frac{\varphi(|x|)^2}{|x-y|^{2+s}} \, d\sigma(x)d\sigma(y) .
    \end{align*}
Note that 
\begin{align*}
    \int_{ L_x }  \int_{ L_y }  \frac{\varphi(|x|)^2}{|x-y|^{2+s}} \, d\sigma(x)d\sigma(y) & = \int_{ L_x } \varphi(|x|)^2 \, d\sigma(x) \int_{L_y } \frac{d\sigma(y)}{(x^2+y^2)^{\frac{2+s}{2}}} \,  \\ &= \int_0^\infty \varphi(x)^2 \left( \frac{1}{x^{1+s}}\int_0^\infty \frac{dt}{(1+t^2)^{\frac{2+s}{2}}}\right)   dx \\ &= \frac{\sqrt{\pi}\, \Gamma \big( \tfrac{1+s}{2} \big) }{2 \Gamma \big( \tfrac{2+s}{2} \big) } \int_0^\infty \frac{\varphi(x)^2}{x^{1+s}} \, dx , 
\end{align*}
and similarly 
\begin{equation*}
    \int_0^\infty \hspace{-4pt} \int_{-\infty}^0 \frac{\varphi(x)^2}{|x-y|^{2+s}} \,  dxdy = \frac{1}{1+s} \int_0^\infty \frac{\varphi(x)^2}{x^{1+s}} \, dx  . 
\end{equation*}
Hence 
    \begin{align}\label{eq 1}
        \iint_{\partial \textbf{X} \times \partial \textbf{X}} \frac{|\nu(x)- \nu(y)|^2}{|x-y|^{2+s}} \varphi^2(x) \, d\sigma(x) d\sigma(y) = 16 \left( \frac{1}{1+s} + \frac{\sqrt{\pi}\, \Gamma \big( \tfrac{1+s}{2} \big) }{2 \Gamma \big( \tfrac{2+s}{2} \big) } \right)  \int_0^\infty \frac{\varphi(x)^2}{x^{1+s}} \, dx . 
    \end{align}
    
On the other hand, for the Sobolev part in the stability inequality
\begin{align}\label{eq 2}
    \iint_{\partial \textbf{X} \times \partial \textbf{X} } & \frac{|\varphi(|x|)-\varphi(|y|)|^2}{|x-y|^{2+s}} \, d\sigma(x) d\sigma(y) \le 100 \int_0^\infty \hspace{-4pt} \int_0^\infty \frac{ |\varphi(x)-\varphi(y)|^2}{|x-y|^{2+s}} \, dxdy .
\end{align}

Now, we use the fact that Hardy's inequality is saturated. Choose $\varphi(x)=\xi(|x|)$ where $\xi$ saturates the Hardy's inequality as in \eqref{eq: Hardy sat}. Applying \eqref{eq 1} and \eqref{eq 2} with $\varphi(x)=\xi(|x|)$ we obtain
\begin{align*}
     \iint_{\partial \textbf{X} \times \partial \textbf{X}} \frac{|\nu(x)- \nu(y)|^2}{|x-y|^{2+s}} &  \xi^2(x) \, d\sigma(x) d\sigma(y) = 16 \left( \frac{1}{1+s} + \frac{\sqrt{\pi}\, \Gamma \big( \tfrac{1+s}{2} \big) }{2 \Gamma \big( \tfrac{2+s}{2} \big) } \right) \int_{0}^\infty \frac{\xi(x)^2}{x^{1+s}} \, dx \\ &  \stackrel{\eqref{eq: Hardy sat}}{\ge} \frac{16}{Cs^2} \left( \frac{1}{1+s} + \frac{\sqrt{\pi}\, \Gamma \big( \tfrac{1+s}{2} \big) }{2 \Gamma \big( \tfrac{2+s}{2} \big) } \right) \int_0^\infty \hspace{-4pt} \int_0^\infty  \frac{|\xi(x)-\xi(y)|^2}{|x-y|^{2+s}} \, dxdy \\ & \ge  \frac{16}{100Cs^2} \left( \frac{1}{1+s} + \frac{\sqrt{\pi}\, \Gamma \big( \tfrac{1+s}{2} \big) }{2 \Gamma \big( \tfrac{2+s}{2} \big) } \right)  \iint_{\partial \textbf{X} \times \partial \textbf{X} } \frac{|\xi(x)-\xi(y)|^2}{|x-y|^{2+s}} \, d\sigma(x) d\sigma(y) .
\end{align*}
Thus, in order to contradict stability, it is sufficient that 
\begin{equation*}
   \frac{16}{100Cs^2} \left( \frac{1}{1+s} + \frac{\sqrt{\pi}\, \Gamma \big( \tfrac{1+s}{2} \big) }{2 \Gamma \big( \tfrac{2+s}{2} \big) } \right) \ge 1 , 
\end{equation*}
which is clearly the case if $s$ is sufficiently small. Hence, $\textbf{X}$ is unstable in this range.
\end{proof}

\vspace{.5cm}
\noindent 
\textbf{Acknowledgements.} The author is extremely grateful to Enric Florit-Simon and Mattia Freguglia for their many valuable comments on a preliminary version of this work.

	 	 	\bibliography{references}
	 	 	\bibliographystyle{alpha}
	 	 	
\end{document}